\newtheorem{theorem}{Theorem}[section]
\newtheorem{lemma}[theorem]{Lemma}
\newtheorem{definition}[theorem]{Definition}
\numberwithin{equation}{section}
\theoremstyle{remark}
\newtheorem{ex}{Example}
\title{Bound-preserving discontinuous Galerkin methods with modified Patankar time integrations for chemical reacting flows\footnote{\baselineskip 1.2pc The first and third authors were supported by the NSF grant DMS-1818467 and Simons Foundation 961585.}}
\author{Fangyao Zhu\footnote{Department of Mathematical Sciences,
Michigan Technological University, Houghton, MI 49931. E-mail: fangyaoz@mtu.edu},\quad Juntao Huang\footnote{Department of Mathematics and Statistics, Texas Tech University, Lubbock, TX 79409.
{Email: juntao.huang@ttu.edu}},\quad Yang Yang\footnote{Department of Mathematical Sciences,
Michigan Technological University, Houghton, MI 49931. E-mail:
yyang7@mtu.edu}}
\date{}
\begin{document}
\baselineskip 1.2pc
\maketitle
\begin{abstract}
\baselineskip 1pc
In this paper, we develop bound-preserving discontinuous Galerkin (DG) methods for chemical reactive flows. There are several difficulties in constructing suitable numerical schemes. First of all, the density and internal energy are positive, and the mass fraction of each species is between 0 and 1. Secondly, due to the rapid reaction rate, the system may contain stiff sources, and the strong-stability-preserving explicit Runge-Kutta method may result in limited time step sizes. To obtain physically relevant numerical approximations, we apply the bound-preserving technique to the DG methods. Though traditional positivity-preserving techniques can successfully yield positive density, internal energy and mass fractions, it may not enforce the upper bound 1 of the mass fractions. To solve this problem, we need to (1) make sure the numerical fluxes in the equations of the mass fractions are consistent with that in the equation of the density; (2) choose conservative time integrations such that the summation of the mass fractions is preserved. With the above two conditions, the positive mass fractions have summation 1, then they are all between 0 and 1. For time discretization, we apply the modified Runge-Kutta/multi-step Patankar methods, which are explicit for the flux while implicit for the source. Such methods can handle stiff sources with relatively large time steps, preserve the positivity of the target variables, and keep the summation of the mass fractions to be 1. Finally, it is not straightforward to combine the bound-preserving DG methods and the Patankar time integrations. The positivity-preserving technique for DG method requires positive numerical approximations at the cell interfaces, while Patankar methods can keep the positivity of the pre-selected point-values of the target variables. To match the degree of freedom, we use $Q^k$ polynomials on rectangular meshes for problems in two space dimensions. To evolve in time, we first read the polynomials at the Gaussian points. Then suitable slope limiters can be applied to enforce the positivity of the solutions at those points, which can be preserved by the Patankar methods, leading to positive updated numerical cell averages. In addition, we use another slope limiter to get positive solutions used for the bound-preserving technique for the flux. Numerical examples are given to demonstrate the good performance of the proposed schemes.\\

\noindent
\textbf{Key Words:} compressible Euler equations, chemical reacting flows, bound-preserving, discontinuous Galerkin method, modified Patankar method
\end{abstract}

\vfill

\section{Introduction}

In this paper, we develop numerical methods for the chemical reacting flows \cite{1D example}
\begin{subequations}\label{eq11}
\begin{eqnarray}
&& \rho_t + m_x + n_y = 0   \label{1.1a} \\
&& m_t + (mu +p)_x + (mv)_y= 0  \label{1.1b} \\
&& n_t + (nu)_x + (nv + p)_y= 0  \label{1.1n} \\
&& E_t + ((E+p)u)_x + ((E+p)v)_y= 0  \label{1.1c} \\
&& (c_1)_t + (mz_1)_x + (nz_1)_y= s_1 \label{1.1d} \\
&& ... \notag \\ 
&& (c_{M-1})_t + (mz_{M-1})_x + (nz_{M-1})_y =s_{M-1}.\label{1.1e}
\end{eqnarray}
\end{subequations}
Here $\rho$ is the density, $(u,v)$ is the velocity field, $m=\rho u$ and $n=\rho v$ are the momentum in $x$ and $y$ directions, $E$ is the total energy which is the summation of the kinetic and internal energy of the fluid, $p$ is the pressure, $z_i$ is the mass fraction of the $i$-th species with $i=1,2, ...,M$ and $M$ being the number of species. For $1\leq i\leq M$, we have $c_i=\rho z_i$ and $\sum_{i=1}^{M} z_i=1$. Therefore, the total density satisfies $\rho=\sum_{i=1}^{M} c_i$ and $0\leq z_i\leq 1$. The pressure is
$$
p=RT\sum_{i=1}^M \frac{\rho_i}{M_i}
$$
where $R$ is the universal gas constant, $T=\rho/p$ is the temperature, and $M_i$ is the molar mass of the $i$-th species. The total energy $E$ is expressed as
$$
E = \sum_{i=1}^{M} \rho_i e_{in,i}(T) +\sum_{i=1}^{M} \rho_i q_i +\frac12 \rho (u^2 + v^2)
$$
where $q_i$ is the enthalpy for the $i$th species and $e_{in,i}(T)=C_i T$ is the internal energy of the $i$th species with $C_i=3R/2M_i$ and $5R/2M_i$ for the monoatomic and diatomic species, respectively.

We can write \eqref{eq11} in a compact form :
\begin{eqnarray}
&& \bm{U} _t+\bm{F}\left(\bm{U}\right)_x+\bm{G}\left(\bm{U}\right)_y= \bm{S}\left(\bm{U}\right) \label{equation2}
\end{eqnarray}
where $\bm{S}\left(\bm{U}\right)$ denotes the source describing the chemical reactions of the form
$$
\nu_{1,j}^{'}X_1+\nu_{2,j}^{'}X_2+...+\nu_{M,j}^{'}X_M ~ {\rightleftharpoons} ~ \nu_{1,j}^{''}X_1+\nu_{2,j}^{''}X_2+...+\nu_{M,j}^{''}X_M ,\quad j=1,...,J,
$$
where $J$ is the total number of reactions, $\nu_{i,j}^{'}$ and $\nu_{i,j}^{''}$ are the stoichiometric coefficients of the reactants and productions of the $i$th species in the $j$th reaction. For non-equilibrium chemistry, the rate of production in \eqref{eq11} can be written as 
$$
s_i=M_i\sum_{j=1}^{J}(\nu_{i,j}^{''}-\nu_{i,j}^{'})(k_{f,j}\prod_{s=1}^{M}\left(\frac{c_s}{M_s}\right)^{\nu_{s,j}^{'}}-k_{b,j}\prod_{s=1}^{M}\left(\frac{c_s}{M_s}\right)^{\nu_{s,j}^{''}}),\quad i=1,...,M.
$$
where $k_{f,j}$ and $k_{b,j}$ are functions of temperature indicating the forward and backward reaction rates. 

The solution to chemical reacting flows may contain shocks. Moreover, numerical schemes may result in non-physical numerical approximations, e.g. the density and pressure are negative, and the mass fractions are out of the interval $[0,1]$. The non-physical numerical approximations may further lead to ill-posed problems and eventually blow-up of the numerical simulations. Hence, constructing a bound-preserving scheme is essential. In this paper, we apply discontinuous Galerkin (DG) method that is high order accurate and flexible on geometry. The method was first introduced in 1973 \cite{DG} for the neutron transport equation, a time independent hyperbolic equation. Later, Cockburn and Shu extended the DG method for solving time dependent problems such as nonlinear convection problems and Euler equations. The framework was given in a series of papers \cite{Cockburn I,Cockburn II, Cockburn III, Cockburn IV, Cockburn V}, where the DG method was coupled with Runge-Kutta (RK) time integration along with TVB nonlinear limiters to achieve non-oscillatory properties for strong shocks. However, the TVB limiter is not sufficient to maintain the positivity of the numerical approximations. %
In \cite{PP/detonation}, high order DG methods for two-dimensional gaseous detonations were constructed to preserve the positivity of density, pressure, and all the mass fractions. The idea is to apply first order Euler forward time discretization and find a sufficient condition for the cell averages of the DG numerical approximations to be positive. Then a slope limiter is applied to construct new physically relevant numerical approximations, keeping the original cell averages. %
The time discretizations can be extended to high order strong-stability-presering (SSP) RK or multi-step (MS) methods \cite{SSP,Shu-Osher form,TVD} since they are convex combinations of the Euler forward method. %
For our problem, we need to preserve not only the lower bound 0 but also the upper bound 1 for the mass fractions. %
The first work preserving the physical bounds of the mass fractions was given in \cite{bound-preserving,Chuenjarern}, where the compressible miscible displacements in porous media were discussed. Later, the idea was adapted to multi-species and multi-reaction detonations to construct high order DG schemes in \cite{JDdetonation1,JDdetonation2}. The basic strategy is to apply the positivity-preserving techniques to each mass fraction $z_i$ and enforce $\sum_{i=1}^M z_i=1$ to obtain physically relevant numerical approximations by using conservative time integrations. The extension to finite difference methods was also given in \cite{DuCAMC}.

Another difficulty is how to deal with the stiff source terms due to the rapid reaction rate in chemical reactive flows. Direct application of the explicit time integration may result in limited time steps, and one may consider Implicit-Explicit (IMEX) methods which treats the source term implicitly while the flux term explicitly. By doing so, the positivity-preserving technique can also be applied to the flux terms. Unfortunately, it is not easy to construct implicit solvers for the source terms if the reactions are complicated and the system turns out to be fully nonlinear. Therefore, one may try to linearize the source term, keeping the conservation of mass. One method that can preserve the physical bounds and treat the source term implicitly is the modified Patankar (MP) time integration, which was developed from the Patankar trick that first introduced in \cite{Patankar trick}.

To demonstrate the idea of the MP time integration, we consider a class of system of ODEs which describes the chemical reactions by ignoring the convection terms in \eqref{equation2}:
\begin{eqnarray}
&& \frac{dc_i}{dt}=M_i\sum_{r=1}^{R}(\nu_{i,r}^{''}-\nu_{i,r}^{'})(k_{f,r}\prod_{s=1}^{M}\left(\frac{c_s}{M_s}\right)^{\nu_{s,r}^{'}}-k_{b,r}\prod_{s=1}^{M}\left(\frac{c_s}{M_s}\right)^{\nu_{s,r}^{''}}),\quad i=1,...,M.\label{equation3}
\end{eqnarray}
Equation \eqref{equation3} can be seen as production-destruction equations which have the form
\begin{eqnarray}
&& \frac{dc_i}{dt}=P_i(c)-D_i(c), \quad i=1,2,...,M, \label{equation4}
\end{eqnarray}
with
$$
P_i(c)=\sum_{j=1}^{M}p_{ij}(c), \quad D_i(c)=\sum_{j=1}^{M}d_{ij}(c)
$$
and
$$
p_{ij}(c)=d_{ji}(c)\geq0
$$
where $c=(c_1,c_2,...,c_M)^T$ denotes the concentrations, the production function $p_{ij}(c)$ describes the transformation rate of $j$-th component to $i$-th component; whereas $d_{ij}(c)$, the destruction function, denotes the transformation rate of $i$th component to $j$-th component. The solution of \eqref{equation4} is conservative which means that $\sum_{i=1}^{N}c_i(t)$ remains unchanged with respect to time. In addition, the solution should be positive if the initial condition is positive and $d_{ij}(c)=0$ for $c_i=0$.

The Patankar trick \cite{Patankar trick} was to multiply the destruction term by the ratio of the numerical approximations between two time levels, leading to a linear implicit scheme. Unfortunately, such a method fails the mass conservation. In contrast, the MP scheme in \cite{modified Patankar} guarantees both positivity and conservation properties with any time step size, and the method reads as:
\begin{eqnarray}
c_i^{n+1}=c_i^{n}+\Delta t\left(\sum_{j}p_{ij}(c^n)\frac{c_j^{n+1}}{c_j^n}-\sum_{j}d_{ij}(c^n)\frac{c_i^{n+1}}{c_i^n}\right).\label{equation6}
\end{eqnarray}
However, the scheme is first order accurate only. In \cite{MPRK2_PP,MPRK3_PP}, MP RK schemes of second and third order were introduced. In \cite{DeC}, the MP trick is adapted to deferred correction (DeC) schemes and developed MPDeC schemes of arbitrary order of accuracy. More recently, the stability analysis of the MP schemes are given in \cite{Izgin1,Izgin2,Izgin3,Izgin4}.
However, it is not easy to couple the convection terms into the schemes in \cite{MPRK2_PP}, \cite{MPRK3_PP} and  \cite{DeC}. Therefore, instead of using the classical form of RK methods, Huang and Shu applied RK schemes of Shu-Osher form \cite{Shu-Osher form} and constructed another class of MP RK schemes with the SSP structure. These conservative and unconditionally positivity-preserving MP RK methods were given in \cite{MPRK2,MPRK3} along with the necessary and sufficient conditions derived to obtain the desired order of accuracy. In these two works, the time integration was combined with the positivity-preserving finite difference weighted essentially non-oscillatory (WENO) scheme.

In this paper, we first construct the MS MP time integration and couple the RK/MS MP time integration with DG spatial discretization. The combination is not straightforward due to the inconsistency of the collocation points required in the positivity-preserving technique and the MP time integrations. Due to the SSP structure of the MP time integration to be used in this paper, we use \eqref{equation6} to demonstrate the full algorithm, and the fully-discrete scheme with convection term can be written as
$$
c_i^{n+1}=c_i^{n}+\Delta tf(c^n)+\Delta t\left(\sum_{j}p_{ij}(c^n)\frac{c_j^{n+1}}{c_j^n}-\sum_{j}d_{ij}(c^n)\frac{c_i^{n+1}}{c_i^n}\right),
$$
where $f(c)$ is the convection term after DG spatial discretization. Given the physically relevant numerical approximations at time level $n$, we first treat the convection term only, and define $\tilde{c}_i=c_i^n+\Delta tf(c^n)$. The positivity-preserving technique yields positive numerical cell average of $\tilde{c}$. Then we apply a slope limiter to obtain positive $\tilde{c}$ at Gaussian points in each cell as this is required by the MP time integration. Next, we update the numerical approximation at each Gaussian point by using the MP time integration, and the resulted numerical approximations are also positive, leading to positive numerical cell averages. Finally, another slope limiter is called to have positive numerical approximations at the cell interfaces which will be used in the next time level.

The paper is organized as follows. In Section 2, we first demonstrate the MP RK method and then develop the second and third order MS MP scheme and show the necessary and sufficient conditions to obtain the desired order of accuracy. In Section 3, we show the positivity-preserving property of the DG method as well as the bound-preserving technique for the mass fractions. In Section 4, we give numerical examples to demonstrate the performance of the scheme. Some concluding remarks will be given in Section 5.

\section{The ODE solver}
In this section, we demonstrate the time integrations to be used in this paper. We first review the MP RK method given in \cite{MPRK2,MPRK3} and then construct the MP MS methods.

\subsection{The modified Patankar Runge-Kutta methods}
In this subsection, we briefly present the MP RK methods proposed in \cite{MPRK2,MPRK3}. The general production-destruction system can be written as
\begin{eqnarray}
&& \frac{dc_i}{dt} = \sum_{j=1}^{N} p_{ij}(c)-\sum_{j=1}^{N} d_{ij}(c), \quad i = 1,2,\dots,N, \label{equation2.1}
\end{eqnarray}
where $c_i$ is the concentration of the $i$-th component ${\bf c}=(c_1,c_2,\cdots,c_M)^T$. %
The second order MP RK scheme \cite{MPRK2} is
\begin{subequations}
\begin{eqnarray}
c_{i}^{(0)} &=& c_{i}^n, \\
c_{i}^{(1)} &=&\alpha_{10} c_{i}^{(0)}+\Delta t\beta_{10}\left(\sum_j p_{ij}(c^{(0)})\frac{c_{j}^{(1)}}{c_{j}^{(0)}}-\sum_j d_{ij}(c^{(0)})\frac{c_{i}^{(1)}}{c_{i}^{(0)}}\right),\\
c_{i}^{n+1} &=&\alpha_{20}c_{i}^{(0)}+\alpha_{21}c_{i}^{(1)} \notag \\ 
&&+\Delta t \left(\sum_j\left(\beta_{20}p_{ij}(c^{(0)})+\beta_{21}p_{ij}(c^{(1)})\right)\frac{c_{j}^{n+1}}{(c_{j}^{(1)})^{s} (c_{j}^{(0)})^{1-s}}\right. \notag \\
&& \left.-\sum_j \left(\beta_{20}d_{ij}(c^{(0)})+\beta_{21}d_{ij}(c^{(1)})\right)\frac{c_{i}^{n+1}}{(c_{i}^{(1)})^{s} (c_{i}^{(0)})^{1-s}}\right) 
\end{eqnarray}
\end{subequations}
with the parameters satisfying 
$$
\alpha_{10}=1,\quad \alpha_{20}=1-\alpha, \quad \alpha_{21}=\alpha,
$$
$$
\beta_{10}=\beta, \quad \beta_{20}=1-\frac{1}{2\beta}-\alpha\beta, \quad \beta_{21}=\frac{1}{2\beta},
$$
$$
s=\frac{1-\alpha\beta+\alpha\beta^2}{\beta(1-\alpha\beta)}
$$
where
$$
0\leq\alpha\leq1, \quad \beta>0, \quad \alpha\beta+\frac{1}{2\beta}\leq1.
$$
The third order MP RK scheme \cite{MPRK3} is
\begin{subequations}
\begin{eqnarray}
c_{i}^{(0)} &=& c_{i}^n, \\
c_{i}^{(1)} &=&\alpha_{10} c_{i}^{(0)}+\Delta t\beta_{10}\left(\sum_j p_{i,j}(c^{(0)})\frac{c_{j}^{(1)}}{c_{j}^{(0)}}-\sum_j d_{i,j}(c^{(0)})\frac{c_{i}^{(1)}}{c_{i}^{(0)}}\right),\\
\rho_i &=& n_1c_i^{(1)}+n_2c_i^n\left(\frac{c_i^{(1)}}{c_i^n}\right)^2, \notag \\ 
c_i^{(2)} &=& \alpha_{20}c_i^{(0)}+\alpha_{21}c_i^{(1)} \notag \\
&&+\Delta t\sum_j\left(\left(\beta_{20}p_{ij}^{(0)}+\beta_{21}p_{ij}^{(1)}\right)\frac{c_j^{(2)}}{\rho_j}-\left(\beta_{20}d_{ij}^{(0)}+\beta_{21}d_{ij}^{(1)} \right)\frac{c_i^{(2)}}{\rho_i}\right), \\
\mu_i &=& c_i^n\left(\frac{c_i^{(1)}}{c_i^n}\right)^s, \notag \\
a_i &=& \eta_{1}c_i^n+\eta_{2}c_i^{(1)}+\Delta t\sum_j\left(\left(\eta_{3}p_{ij}^n+\eta_{4}p_{ij}^{(1)}\right)\frac{a_j}{\mu_j}-\left(\eta_{3}d_{ij}^n+\eta_{4}d_{ij}^{(1)}\right)\frac{a_i}{\mu_i}\right), \\
\sigma_i &=& a_i+zc_i^n\frac{c_i^{(2)}}{\rho_i}, \notag \\
c_{i}^{n+1} &=&\alpha_{30}c_{i}^{(0)}+\alpha_{31}c_{i}^{(1)}+\alpha_{32}c_{i}^{(2)} \notag \\ 
&&+\Delta t\sum_j\left(\left(\beta_{30}p_{ij}^{(0)}+\beta{31}p_{ij}^{(1)}+\beta_{32}p_{ij}^{(2)}\right)\frac{c_j^{n+1}}{\sigma_j}-\left(\beta_{30}d_{ij}^{(0)}+\beta_{31}d_{ij}^{(1)}+\beta{32}d_{ij}^{(2)}\right)\frac{c_i^{n+1}}{\sigma_i}\right) 
\end{eqnarray}
\end{subequations}
where the coefficients $\alpha_{ij}$ and $\beta_{ij}$, and the parameters $n_1$, $n_2$, $\eta_1$, $\eta_2$, $\eta_3$, $\eta_4$, $s$ and $z$ are given in \cite{MPRK3}.  
This scheme is third-order accurate and unconditionally positivity-preserving.

\subsection{The modified Patankar multi-step methods}
We focus on the production-destruction equations \eqref{equation2.1} and derive MP MS scheme in this subsection. In \cite{SSP}, the explicit SSP MS methods for the nonlinear ODE 
 \begin{eqnarray}
&& \frac{du}{dt} = L(u) 
\end{eqnarray}
are given as follows: for the second-order, 
 \begin{eqnarray}
&& u^{n+1} = \frac{1}{4} u^{n-2} +(\frac{3}{4} u^n + \frac{3}{2} \Delta t L(u^n)),
\end{eqnarray}
and for the third-order, 
 \begin{eqnarray}
&& u^{n+1} = (\frac{11}{27} u^{n-3} + \frac{4}{9} \Delta t L(u^{n-3})) + (\frac{16}{27} u^n + \frac{16}{9} \Delta t L(u^n)).
\end{eqnarray}
We can construct second and third order MP MS method according to it.

\subsubsection{Second-order scheme}
The second-order explicit SSP multistep method for \eqref{equation2.1} is
 \begin{eqnarray}
&& c_i^{n+1} = \frac{1}{4} c_i^{n-2} +\frac{3}{4} c_i^n + \frac{3}{2} \Delta t (\sum_{j} p_{ij}^n - \sum_{j} d_{ij}^n). \label{equation2.5}
\end{eqnarray}
Following \cite{MPRK2}, we make some modification to preserve conservation and positivity unconditionally. The MP MS method is rewritten as
 \begin{eqnarray}
&& c_i^{n+1} = \frac{1}{4} c_i^{n-2} +\frac{3}{4} c_i^n + \frac{3}{2} \Delta t (\sum_{j} p_{ij}^n \frac{c_j^{n+1}}{\sigma_j} - \sum_{j} d_{ij}^n \frac{c_i^{n+1}}{\sigma_i}), \label{equation2.6}
\end{eqnarray}
where $\sigma_i \geq 0$ is an undetermined function of $c_i^{n-2}$, $c_i^{n-1}$ and $c_i^{n}$.

If we assume
 \begin{eqnarray}
&& \sigma_i = c_i^{n+1} + O(\Delta t^2) \label{equation2.7}
\end{eqnarray}
then by \eqref{equation2.6} we have
\begin{align*}
c_i^{n+1} &= \frac{1}{4}c_i^{n-2} + \frac{3}{4}c_i^{n} + \frac{3}{2} \Delta t (\sum_{j}p_{ij}^n \frac{c_j^{n+1}}{c_j^{n+1}+O(\Delta t^2)} -\sum_{j}d_{ij}^n \frac{c_i^{n+1}}{c_i^{n+1}+O(\Delta t^2)})\\
&=\frac{1}{4}c_i^{n-2} + \frac{3}{4}c_i^{n} + \frac{3}{2} \Delta t (\sum_{j}p_{ij}^n (1+O(\Delta t^2)) -\sum_{j}d_{ij}^n (1+O(\Delta t^2)))\\
&=\frac{1}{4}c_i^{n-2} + \frac{3}{4}c_i^{n} + \frac{3}{2} \Delta t (\sum_{j}p_{ij}^n  -\sum_{j}d_{ij}^n ) +O(\Delta t^3)
\end{align*}
which shows that \eqref{equation2.6} is second-order accurate by using the fact that the explicit MS method \eqref{equation2.5} is second-order accurate. Therefore, \eqref{equation2.7} is a sufficient condition that \eqref{equation2.6} is second-order accurate.

In the second-order scheme \eqref{equation2.6}, if we take
 \begin{eqnarray}
&& \sigma_i = (c_i^n)^s(c_i^{n-1})^r(c_i^{n-2})^{1-r-s} \label{equation2.8}
\end{eqnarray}
with $s$ and $r$ to be determined later, then the Taylor expansions for $c_i^{n-k}$ at $t^{n+1}$ is
 \begin{eqnarray}
&& c_i^{n-k} = c_i^{n+1}-(k+1)\Delta t(P_i^{n+1}-D_i^{n+1})+O(\Delta t^2) \label{equation2.9}
\end{eqnarray}
for $k=0,1,2$. For convenience of notation, we denote 
 \begin{eqnarray}
&& Q_1 := P_i^{n+1}-D_i^{n+1}
\end{eqnarray}
for further analysis and then \eqref{equation2.9} becomes
 \begin{eqnarray}
&& c_i^{n-k} = c_i^{n+1}-(k+1)\Delta t Q_1 + O(\Delta t^2).
\end{eqnarray}
Therefore, the Taylor expansions for $\sigma_i$ at $t^{n+1}$ is
\begin{align*}
\sigma_i &= (c_i^{n})^s(c_i^{n-1})^r(c_i^{n-2})^{1-r-s}\\
&=(c_i^{n+1} -  \Delta t Q_1)^s(c_i^{n+1}-2\Delta t Q_1)^r(c_i^{n+1}-3\Delta t Q_1)^{1-r-s}+O(\Delta t^2)\\
&=c_i^{n+1} +(r+2s-3)\Delta t Q_1 +O(\Delta t^2),
\end{align*}
and now we have  
 \begin{eqnarray}
&& r+2s-3 = 0 \label{sufficient cond}
\end{eqnarray}
which is required for \eqref{equation2.8} to be satisfied.

Before stating the main theorem, we give the following lemma:
\begin{lemma}
{The MP MS scheme \eqref{equation2.6} is conservative. If $\sum_{i=1}^M c_i^{n-2}=\sum_{i=1}^M c_i^n=1$, then $\sum_{i=1}^M c_i^{n+1}=1$ for $i=1,2,...,M$.}
\end{lemma}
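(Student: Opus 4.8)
The plan is to prove conservation by summing the scheme \eqref{equation2.6} over the species index $i$ and exploiting the structural identity $p_{ij}(c)=d_{ji}(c)$ stated in the introduction, after which the normalization $\sum_{i=1}^M c_i^{n+1}=1$ follows immediately from the hypothesis.

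First I would sum \eqref{equation2.6} over $i=1,\dots,M$. The explicit part contributes $\frac14\sum_{i=1}^M c_i^{n-2}+\frac34\sum_{i=1}^M c_i^n$, so the whole argument reduces to showing that the implicit source contribution vanishes, i.e.
$$\sum_{i=1}^M\left(\sum_j p_{ij}^n\frac{c_j^{n+1}}{\sigma_j}-\sum_j d_{ij}^n\frac{c_i^{n+1}}{\sigma_i}\right)=0.$$
The key step is to rewrite the destruction double sum using $d_{ij}^n=p_{ji}^n$ and then relabel the summation indices $i\leftrightarrow j$:
$$\sum_{i=1}^M\sum_j d_{ij}^n\frac{c_i^{n+1}}{\sigma_i}=\sum_{i=1}^M\sum_j p_{ji}^n\frac{c_i^{n+1}}{\sigma_i}=\sum_{j=1}^M\sum_i p_{ij}^n\frac{c_j^{n+1}}{\sigma_j}.$$
The right-hand side coincides term-by-term with the production double sum, so the two cancel and the summed scheme collapses to
$$\sum_{i=1}^M c_i^{n+1}=\frac14\sum_{i=1}^M c_i^{n-2}+\frac34\sum_{i=1}^M c_i^n.$$
This identity is precisely the conservation statement: the total concentration at the new level is the same weighted combination of the totals at the old levels that appears in the underlying SSP multistep method \eqref{equation2.5}, independent of $\Delta t$ and of the choice of $\sigma_i$.

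Finally, I would substitute the hypothesis $\sum_{i=1}^M c_i^{n-2}=\sum_{i=1}^M c_i^n=1$ into this identity. Since the explicit weights satisfy $\frac14+\frac34=1$, this gives $\sum_{i=1}^M c_i^{n+1}=\frac14+\frac34=1$, as claimed. I do not expect a genuine obstacle here; the only point requiring care is the index exchange, where one must use both the pairing $p_{ij}=d_{ji}$ and the observation that the weight $c_j^{n+1}/\sigma_j$ carried by each production term is exactly the weight that reappears on the matching destruction term after relabeling, so that the cancellation is term-by-term rather than merely in aggregate.
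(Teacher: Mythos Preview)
Your proof is correct and follows the same approach as the paper: sum \eqref{equation2.6} over $i$, observe that the Patankar source contribution $\sum_{i,j}\bigl(p_{ij}^n\,c_j^{n+1}/\sigma_j-d_{ij}^n\,c_i^{n+1}/\sigma_i\bigr)$ vanishes, and conclude $\sum_i c_i^{n+1}=\tfrac14\sum_i c_i^{n-2}+\tfrac34\sum_i c_i^n$. You in fact give more detail than the paper does, spelling out the index-swap argument via $p_{ij}=d_{ji}$ that the paper merely asserts.
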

\begin{proof}
{We add up \eqref{equation2.6} over $i$ and use the fact that the modified Patankar scheme is conservative, i.e. $\sum_{i,j}( p_{ij}^n \frac{c_j^{n+1}}{\sigma_j} -  d_{ij}^n \frac{c_i^{n+1}}{\sigma_i})=0$ to obtain $$\sum_i c_i^{n+1}=\frac14 \sum_i c_i^{n-2} +\frac34 \sum_i c_i^n.$$}
\end{proof}
\begin{lemma}
The MP MS scheme \eqref{equation2.6} is unconditionally positivty-preserving. That is, for all $\Delta t >0$ and $c_i^{n-2}$,$c_i^{n-1}$ and $c_i^{n}>0$, we have $c_i^{n+1}>0$ for $i=1,2,...,M$.
\end{lemma}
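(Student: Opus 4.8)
The plan is to read \eqref{equation2.6}, for a fixed $n$, as a linear algebraic system for the unknown vector $\mathbf{c}^{n+1}=(c_1^{n+1},\dots,c_M^{n+1})^{\mathrm{T}}$ and to prove that its coefficient matrix has a nonnegative inverse acting on a positive right-hand side. First I would note that under the hypothesis $c_i^{n-2},c_i^{n-1},c_i^n>0$ the weights in \eqref{equation2.8}, namely $\sigma_i=(c_i^n)^s(c_i^{n-1})^r(c_i^{n-2})^{1-r-s}$, are products of positive numbers raised to real powers and hence strictly positive, so the ratios $c_j^{n+1}/\sigma_j$ are well defined. Moving every term containing $\mathbf{c}^{n+1}$ to the left, \eqref{equation2.6} becomes $\mathbf{A}\,\mathbf{c}^{n+1}=\mathbf{b}$ with $b_i=\tfrac14 c_i^{n-2}+\tfrac34 c_i^n>0$ and
\begin{equation*}
A_{ik}=
\begin{cases}
1+\dfrac{3}{2}\Delta t\,\dfrac{1}{\sigma_i}\Big(\sum_{j} d_{ij}^n-p_{ii}^n\Big), & k=i,\\[2mm]
-\dfrac{3}{2}\Delta t\,\dfrac{p_{ik}^n}{\sigma_k}, & k\neq i.
\end{cases}
\end{equation*}

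Next I would pin down the sign structure of $\mathbf{A}$. Since $p_{ik}^n\ge0$, $\sigma_k>0$ and $\Delta t>0$, every off-diagonal entry satisfies $A_{ik}\le0$, so $\mathbf{A}$ is a $Z$-matrix. For the diagonal I would invoke the defining production--destruction relation $p_{ij}=d_{ji}$, which in particular gives $p_{ii}^n=d_{ii}^n$; therefore $\sum_{j} d_{ij}^n-p_{ii}^n=\sum_{j\neq i} d_{ij}^n\ge0$, whence $A_{ii}\ge1>0$. Thus $\mathbf{A}$ has positive diagonal and nonpositive off-diagonals.

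The crucial step is to show that $\mathbf{A}$ is a nonsingular M-matrix, which I would obtain from strict diagonal dominance by columns. Fixing a column $k$ and again using $p_{ik}=d_{ki}$, the total off-diagonal mass of that column is
\begin{equation*}
\sum_{i\neq k}|A_{ik}|=\frac{3}{2}\Delta t\,\frac{1}{\sigma_k}\sum_{i\neq k} p_{ik}^n=\frac{3}{2}\Delta t\,\frac{1}{\sigma_k}\sum_{i\neq k} d_{ki}^n=\frac{3}{2}\Delta t\,\frac{1}{\sigma_k}\sum_{j\neq k} d_{kj}^n,
\end{equation*}
which is exactly the $\Delta t$-proportional part of $A_{kk}$, so $A_{kk}-\sum_{i\neq k}|A_{ik}|=1>0$ for every $k$ (equivalently, every column of $\mathbf{A}$ sums to $1$). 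A $Z$-matrix with positive diagonal that is strictly diagonally dominant by columns is a nonsingular M-matrix, hence $\mathbf{A}^{-1}\ge0$ entrywise; being nonsingular, $\mathbf{A}^{-1}$ has no zero row. Since $\mathbf{b}>0$ componentwise, it follows that $c_i^{n+1}=\sum_k(\mathbf{A}^{-1})_{ik}\,b_k>0$ for all $i$, which is the assertion.

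I expect the main obstacle to be precisely the column-dominance cancellation: it works only because the symmetry $p_{ij}=d_{ji}$ forces the row-sum of destruction rates in row $k$ to equal the column-sum of production rates feeding column $k$, so the off-diagonal column mass is cancelled exactly by the destruction contribution inside $A_{kk}$, leaving the clean constant $1$; losing this structural identity would break the argument. A self-contained alternative that avoids quoting M-matrix theory is to write $\mathbf{A}=\mathbf{D}-\mathbf{N}$ with $\mathbf{D}$ the (positive) diagonal and $\mathbf{N}\ge0$, and to expand $\mathbf{A}^{-1}=\big(\mathbf{I}-\mathbf{D}^{-1}\mathbf{N}\big)^{-1}\mathbf{D}^{-1}=\sum_{\ell\ge0}(\mathbf{D}^{-1}\mathbf{N})^{\ell}\mathbf{D}^{-1}\ge0$, whose convergence follows from the very same column bound applied to $\mathbf{A}^{\mathrm{T}}$; I would keep the M-matrix phrasing as the cleaner presentation.
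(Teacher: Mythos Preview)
Your proof is correct and follows exactly the standard M-matrix argument that the paper defers to by citing Lemma~2.7 of Kopecz--Meister \cite{MPRK2_PP}: rewrite \eqref{equation2.6} as $\mathbf{A}\,\mathbf{c}^{n+1}=\mathbf{b}$ with $\mathbf{b}>0$, observe that $\mathbf{A}$ is a $Z$-matrix whose columns sum to $1$ (via $p_{ij}=d_{ji}$), conclude it is a nonsingular M-matrix with $\mathbf{A}^{-1}\ge0$, and deduce $\mathbf{c}^{n+1}>0$. The paper itself does not write this out, so your explicit version is a faithful expansion of the reference it points to.
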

The proof is similar to Lemma 2.7 in \cite{MPRK2_PP}, so we skip it. This lemma directly results in the following theorem.
\begin{theorem}
The MP MS scheme \eqref{equation2.6} is second-order accurate with
$$
\sigma_i = (c_i^n)^s(c_i^{n-1})^r(c_i^{n-2})^{1-r-s}
$$
and
$$
r+2s-3 = 0.
$$
Moreover, it is conservative in the sense that
$$
 \sum_{i=1}^M c_i^{n-2}=\sum_{i=1}^M c_i^{n-1}=\sum_{i=1}^M c_i^n=\sum_{i=1}^M c_i^{n+1}
$$
and unconditionally positivity-preserving:
if $c_i^n\geq0$ for $i=1,2,...M$, then $c_i^{n+1}\geq0$ for $i=1,2,...,M$.
\end{theorem}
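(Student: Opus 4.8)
The plan is to assemble the statement from three ingredients, two of which are already in hand: the accuracy analysis carried out above via Taylor expansion, and the two preceding lemmas on conservation and positivity. Since \eqref{equation2.6} is a modified-Patankar version of the second-order explicit SSP multistep method \eqref{equation2.5} and all three claimed properties have essentially been reduced to earlier computations, the proof is mostly a matter of organizing these pieces and filling in one short induction.

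For the accuracy claim I would invoke the sufficient condition \eqref{equation2.7}: if $\sigma_i = c_i^{n+1} + O(\Delta t^2)$, then the Patankar weights $c_j^{n+1}/\sigma_j$ and $c_i^{n+1}/\sigma_i$ equal $1 + O(\Delta t^2)$, so \eqref{equation2.6} reduces to \eqref{equation2.5} up to $O(\Delta t^3)$ and inherits its second-order consistency. It then remains to verify that the specific choice $\sigma_i = (c_i^n)^s (c_i^{n-1})^r (c_i^{n-2})^{1-r-s}$ satisfies \eqref{equation2.7}. Substituting the expansions $c_i^{n-k} = c_i^{n+1} - (k+1)\Delta t\, Q_1 + O(\Delta t^2)$ and expanding the product gives $\sigma_i = c_i^{n+1} + (r+2s-3)\Delta t\, Q_1 + O(\Delta t^2)$, so the $O(\Delta t)$ term vanishes exactly when $r + 2s - 3 = 0$, which is \eqref{sufficient cond}. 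This is precisely the computation already displayed, so I would simply cite it.

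For conservation I would start from the identity established in the proof of the first lemma, namely $\sum_i c_i^{n+1} = \tfrac14 \sum_i c_i^{n-2} + \tfrac34 \sum_i c_i^n$, which follows by summing \eqref{equation2.6} over $i$ and using the antisymmetry $p_{ij}=d_{ji}$ to cancel the source contributions regardless of the values of $\sigma_i$. The full chain of equalities in the theorem then follows by induction on the time level: assuming the three consecutive totals $\sum_i c_i^{n-2}$, $\sum_i c_i^{n-1}$, $\sum_i c_i^n$ all equal a common value, the convex combination above forces $\sum_i c_i^{n+1}$ to equal that same value, and consistent initialization of the startup values closes the induction.

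Finally, positivity is exactly the content of the second lemma, whose proof parallels Lemma 2.7 of \cite{MPRK2_PP}; I would invoke it directly, noting only that the closed half-line case $c_i^n \ge 0$ follows from the open case $c_i^n > 0$ by continuity. The main obstacle, such as it is, lies not in any single step but in the conservation chain: the first lemma records only the special normalized case, so one must observe that the underlying relation $\sum_i c_i^{n+1} = \tfrac14 \sum_i c_i^{n-2} + \tfrac34 \sum_i c_i^n$ holds for arbitrary conserved totals and then propagate equality across time levels by induction, rather than treating a single step in isolation.
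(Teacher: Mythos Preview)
Your proposal is correct and follows essentially the same approach as the paper: the theorem is assembled directly from the preceding Taylor-expansion accuracy analysis together with the two lemmas on conservation and positivity, with the paper simply stating that the lemma ``directly results in'' the theorem rather than writing out a separate proof block. Your treatment of the conservation chain via induction is slightly more explicit than the paper's, but the underlying argument is the same.
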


\subsubsection{Third-order scheme}
Following the same approach in the previous subsection, we have the modified third order scheme
 \begin{eqnarray}
&& c_i^{n+1} = \frac{11}{27}c_i^{n-3} + \frac{16}{27}c_i^{n} + \Delta t(\sum_{j}(\frac{4}{9}p_{ij}^{n-3} + \frac{16}{9}p_{ij}^{n}) \frac{c_j^{n+1}}{\sigma_{j}} - (\frac{4}{9}d_{ij}^{n-3} + \frac{16}{9}d_{ij}^{n})\frac{c_i^{n+1}}{\sigma_{i}}), \label{equation2.13}
\end{eqnarray}
where $\sigma_i \geq 0$ are undetermined functions of $c_i^{n-k}$, $k = 0,1,2,3$. If we assume that 
 \begin{eqnarray}
&& \sigma_i = c_i^{n+1} + O(\Delta t^3), \label{equation2.14}
\end{eqnarray}
then \eqref{equation2.13} is third-order accurate.

We will derive explicit expressions of $\sigma_i$ in \eqref{equation2.13} for the sufficient condition \eqref{equation2.14} to be satisfied.
Following the analysis for the second-order scheme (2.7), we try to make
 \begin{eqnarray}
&& \sigma_i = (c_i^n)^s(c_i^{n-1})^r(c_i^{n-2})^p(c_i^{n-3})^{1-r-s-p}
\end{eqnarray}
with $s$, $r$ and $p$ to be determined. Taylor expansions for $c_i^{n-k}$ at $t^{n+1}$ with $k=0,1,2,3$ give
 \begin{eqnarray}
 c_i^{n-k} = c_i^{n+1} -(k+1)\Delta t(P_i^{n+1}-D_i^{n+1}) +\frac{(k+1)^2}{2}\Delta t^2\frac{\partial (P_i^{n+1}-D_i^{n+1})}{\partial c} (P^{n+1}-D^{n+1})
 +O(\Delta t^3). 
\end{eqnarray}
For convenience of notation, we denote
 \begin{eqnarray}
&& Q_1:= P_i^{n+1}-D_i^{n+1}, \quad Q_2:= \frac{\partial (P_i^{n+1}-D_i^{n+1})}{\partial c}(P^{n+1}-D^{n+1}),
\end{eqnarray}
and then
 \begin{eqnarray}
&& c_i^{n-k} = c_i^{n+1}-(k+1)\Delta tQ_1+\frac{(k+1)^2}{2}\Delta t^2 Q_2+O(\Delta t^3).
\end{eqnarray}
Taylor expansion for $\sigma_i$ at $t^{n+1}$ yields
\begin{align*}
\sigma_i &= (c_i^{n})^s(c_i^{n-1})^r(c_i^{n-2})^{p}(c_i^{n-3})^{1-r-s-p}\\
&=(c_i^{n+1} -  \Delta t Q_1 +\frac{1}{2}\Delta t^2 Q_2)^s(c_i^{n+1}-2\Delta t Q_1+2\Delta t^2 Q_2)^r \\
&\qquad (c_i^{n+1}-3\Delta t Q_1+\frac{9}{2} \Delta t^2 Q_2)^{p}(c_i^{n+1}-4\Delta t Q_1 +8\Delta t^2 Q_2)^{1-r-s-p}+O(\Delta t^3)\\
&=c_i^{n+1} +(p+2r+3s-4)\Delta t Q_1 -(7p+12r+15s-16)\frac{\Delta t^2}{2} Q_2\\
&\qquad +((p+2r+3s)^2-4r-9s-p)\frac{\Delta t^2 Q_1^2}{2c_i^{n+1}}+O(\Delta t^3)
\end{align*}
Letting the coefficients to be zero, we can solve out
\begin{eqnarray}
&& r=-3s+6, \quad p=3s-8.\label{sufficient cond1}
\end{eqnarray}

\begin{theorem}
The MP MS scheme \eqref{equation2.13} is third-order accurate with
$$
\sigma_i = (c_i^n)^s(c_i^{n-1})^r(c_i^{n-2})^p(c_i^{n-3})^{1-r-s-p}
$$
and
$$
r=-3s+6, \quad p=3s-8.
$$
Moreover, it is conservative which means that
$$
\sum_{i=1}^N c_i^{n-3}=\sum_{i=1}^N c_i^{n-2}=\sum_{i=1}^N c_i^{n-1}=\sum_{i=1}^N c_i^n=\sum_{i=1}^N c_i^{n+1}
$$
and unconditionally positivity-preserving:
if $c_i^n\geq0$ for $i=1,2,...M$, then $c_i^{n+1}\geq0$ for $i=1,2,...,M$.
\end{theorem}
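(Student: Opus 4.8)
The plan is to establish the three assertions — third-order accuracy, conservation, and unconditional positivity — separately, reusing the machinery already set up for the second-order scheme \eqref{equation2.6}.

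\textbf{Accuracy.} The sufficient condition $\sigma_i = c_i^{n+1} + O(\Delta t^3)$ in \eqref{equation2.14} has already been argued to force \eqref{equation2.13} to be third order, since substituting it turns each modified weight $c_j^{n+1}/\sigma_j$ into $1 + O(\Delta t^3)$ and reduces \eqref{equation2.13} to the underlying third-order explicit SSP multistep scheme up to a higher-order perturbation. So it remains only to verify that the product form $\sigma_i = (c_i^n)^s(c_i^{n-1})^r(c_i^{n-2})^p(c_i^{n-3})^{1-r-s-p}$ meets \eqref{equation2.14}. I would carry out the Taylor expansion of each factor about $t^{n+1}$ — exactly the expansion displayed just before the theorem — collect the three lowest-order error coefficients, namely those multiplying $\Delta t\,Q_1$, $\Delta t^2 Q_2$, and $\Delta t^2 Q_1^2/c_i^{n+1}$, and set each to zero. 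The first two give a linear system whose solution is $r=-3s+6$, $p=3s-8$ as in \eqref{sufficient cond1}. The one point worth checking, and the only mild subtlety here, is that the third (quadratic) coefficient $(p+2r+3s)^2-4r-9s-p$ is \emph{not} an independent constraint: once $p+2r+3s=4$ and $7p+12r+15s=16$ hold it equals $16-(4r+9s+p)=0$ automatically, so the three conditions collapse to a one-parameter family indexed by $s$ rather than overdetermining the exponents.

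\textbf{Conservation.} This repeats the argument of the conservation lemma for the second-order scheme almost verbatim. Summing \eqref{equation2.13} over $i$ and invoking $p_{ij}=d_{ji}\ge 0$, the entire modified-source contribution telescopes, $\sum_{i,j}\bigl(P_{ij}\,c_j^{n+1}/\sigma_j - D_{ij}\,c_i^{n+1}/\sigma_i\bigr)=0$, where $P_{ij}=\tfrac{4}{9}p_{ij}^{n-3}+\tfrac{16}{9}p_{ij}^{n}$ and $D_{ij}$ is the analogous destruction combination. Hence $\sum_i c_i^{n+1} = \tfrac{11}{27}\sum_i c_i^{n-3} + \tfrac{16}{27}\sum_i c_i^{n}$, and since $\tfrac{11}{27}+\tfrac{16}{27}=1$, equality of the sums at the earlier levels propagates to level $n+1$; the full chain $\sum_i c_i^{n-3}=\cdots=\sum_i c_i^{n+1}$ then follows by induction in $n$.

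\textbf{Positivity.} This is the part I expect to be the real obstacle, and I would model it on Lemma 2.7 of \cite{MPRK2_PP} as the text suggests. Collecting the unknowns $c_i^{n+1}$, the scheme \eqref{equation2.13} is a linear system $A\,\mathbf{c}^{n+1}=\mathbf{b}$ with right-hand side $\mathbf{b}=\tfrac{11}{27}\mathbf{c}^{n-3}+\tfrac{16}{27}\mathbf{c}^{n}\ge 0$. The matrix has the form $A=I+\Delta t\,B$, with strictly positive diagonal entries $1+\Delta t\sum_k D_{ik}/\sigma_i$ and nonpositive off-diagonal entries $-\Delta t\,P_{ij}/\sigma_j$; the relation $p_{ij}=d_{ji}$ makes $A$ strictly column-diagonally dominant, the dominance being supplied by the $+1$ coming from the time derivative, so $A$ is a nonsingular M-matrix with $A^{-1}\ge 0$, whence $\mathbf{c}^{n+1}=A^{-1}\mathbf{b}\ge 0$. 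The delicate point — and the reason strict rather than merely nonnegative data is natural — is that $\sigma_i$ is a product of the previous stage values raised to the possibly negative exponents $s,r,p,1-r-s-p$, so $\sigma_i$ is finite and strictly positive only when all of $c_i^{n},c_i^{n-1},c_i^{n-2},c_i^{n-3}$ are strictly positive. I would therefore first prove $\mathbf{c}^{n+1}>0$ for strictly positive data, and then obtain the nonnegative statement either by a continuity argument as the data approach the boundary of the positive cone or by using that $d_{ij}(c)=0$ whenever $c_i=0$, exactly as in the cited reference.
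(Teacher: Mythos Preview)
Your proposal is correct and follows the paper's own approach essentially verbatim: the accuracy claim is the Taylor-expansion derivation displayed immediately before the theorem, while conservation and positivity are handled by the second-order Lemmas~2.1--2.2 (the latter deferring to Lemma~2.7 of \cite{MPRK2_PP}), applied now to \eqref{equation2.13} with the obvious replacements of coefficients. Your observation that the third coefficient condition $(p+2r+3s)^2-4r-9s-p=0$ is automatically implied by the two linear ones is a point the paper glosses over with ``letting the coefficients to be zero, we can solve out\ldots'', and it explains why a one-parameter family survives from what is nominally a $3\times 3$ system.
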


\section{Bound-preserving DG scheme}
In this section, we discuss the positivity-preserving DG scheme. We first review the DG scheme and then discuss its positivity-preserving property. Then we demonstrate the bound preserving technique for mass fractions. Finally, we apply the ODE solver to our problem.

\subsection{The DG scheme}
In this subsection, we demonstrate the DG scheme for  \eqref{equation2}. We define the finite element space $V_h^k$ as
$$
V_h^k =\{v:v|_K\in Q^k(K),\:\:\forall K\in\Omega_h \},
$$
where $Q^k(K)$ is the set of tensor product polynomials of degree at most $k$ in cell $K$. Then the DG scheme is to find the numerical solution ${\bm{U_h}} \in [V_h^k]^{M+3}$ such that for all $v\in [V_h^k]^{M+3}$ we have 
\begin{eqnarray}
\frac{d}{dt}\int_K {\bm{U_h}}v d\bm{x}+ \sum_{e\in\partial K}\int_{e} \bm{H}(U_h^{int},U_n^{ext},\nu_e)v d\Gamma -\int_K { \bm{F}(\bm{U_h})} v_{x}d\bm{x}-\int_K { \bm{G}(\bm{U_h})} v_{y}d\bm{x}=\int_K { \bm{S}(\bm{U_h})}vd\bm{x}, \label{DG form}
\end{eqnarray}
where $\nu_e$ is the outward normal of the edge $e$ on element $K$. We use Lax-Friedrichs flux in this paper 
\begin{eqnarray}
\bm{H}(U_1,U_2,\bm{\nu}_e)=\frac{1}{2} [f(U_1) \bm{\nu}_e +f(U_2) \bm{\nu}_e- \alpha (U_1 -U_2)], \label{flux}
\end{eqnarray}
with $\alpha=\| | \langle u,v \rangle | +c\|_{\infty}$, where $c=\sqrt{\frac{\gamma p}{\rho}}$ is the sound speed. Note that  $f=( {\bm{F},\bm{G} } )$ and \\ $\bm{H}=(h_{\rho}, h_m,h_n,h_E,h_1,\cdots,h_{M-1})^T$. The following definition \cite{JDdetonation1,JDdetonation2} is used in the bound-preserving technique.
\begin{definition}
We say the elements in numerical flux $\bm{H}$ in \eqref{flux} are consistent if $h_{\rho}=h_i$ if we take $z_i=1$ for all $1\leq i\leq M-1$.
\end{definition}
As discussed in \cite{JDdetonation1,JDdetonation2}, the Lax-Friedrichs flux is consistent.
To present the positivity-preserving technique, we consider Euler forward time discretization. We take the test function to be 1, then the equation satisfied by the cell averages is
\begin{eqnarray}
\bar{{\bm{U}}}_K^{n+1}=\bar{U}_K^{n}-\frac{\Delta t}{|K|}\sum_{e\in\partial K}\int_e { \bm{H}(\bm{U_h}^{int},\bm{U_h}^{ext}},\bm{\nu}_e)d\Gamma +\frac{\Delta t}{|K|} \int_K {\bm{S}(\bm{U_h}})d\bm{x}, \label{cell average}
\end{eqnarray}
where $\bar{U}_K^n=\frac{1}{|K|}\int_K Ud\bm{x}$ is the cell average of $U$ in cell $K$ at time level $n$ and $\Delta t$ is the time step size. The integrals are approximated by proper quadrature rules which will be discussed in the next section.

\subsection{Bound-Preserving technique}
In this section, we develop the bound-preserving technique for the convection term. First, ignoring the source term from \eqref{cell average}, we have 
\begin{eqnarray}
\bar{\bm{U}}_K^{n+1}=\bar{\bm{U}}_K^{n}-\frac{\Delta t}{|K|}\sum_{e\in\partial K}\int_e \bm{H}(\bm{U_h}^{int},\bm{U_h}^{ext},\bm{\nu}_e)d\Gamma. \label{DG_convection}
\end{eqnarray}
We are seeking numerical approximations chosen from an admissible set $G$ defined as
$$G=\left\{ \bm{U} =\left(\begin{array}{cc}
\rho \\
m \\
n \\
E\\
c_1\\
...\\
c_{M-1}\\

\end{array}\right), \quad \rho>0, \quad p>0, \quad z_i>0, \quad i=1,...,M, \quad \sum_{i=1}^M z_i =1   \right\}.$$ 
As demonstrated in \cite{JDdetonation1}, $G$ is a convex set.

In this paper, we consider rectangular meshes. The spatial domain $\Omega=[a,b]\times [c,d]$ is partitioned into $ N\times N$ cells. The partitions are assumed to be uniform throughout this paper for simplicity. However, this assumption is not essential. We denote the cells in the $x$ and $y$ directions by 
$
I_i=[x_{i-1/2},x_{i+1/2}],
$
and
$J_j=[y_{j-1/2},y_{j+1/2}],
$
respectively.
for each cell $K \in \Omega$ and let $K_{ij}=[x_{i-\frac{1}{2}},x_{i+\frac{1}{2}}] \times [y_{j-\frac{1}{2}},y_{j+\frac{1}{2}}]$ be the $(i,j)$-th cell.  We then define a set of quadrature points $S_K$ on cell $K$. We use $L$ points  Gauss quadrature with $L\geq k+1$ for the integrals in \eqref{DG_convection}. The Gauss quadrature points on $[x_{i-\frac{1}{2}},x_{i+\frac{1}{2}}]$ and $[y_{j-\frac{1}{2}},y_{j+\frac{1}{2}}]$ are denoted as 
$$p_i^x=\{x_i^{\beta} :\beta = 1,...,L\} \quad and \quad p_j^y=\{y_j^{\beta} :\beta = 1,...,L\}.$$
In addition, we denote $\hat{L}$ Gauss-Lobatto points with $2\hat{L}-1\geq k$ on $[x_{i-\frac{1}{2}},x_{i+\frac{1}{2}}]$ and $[y_{j-\frac{1}{2}},y_{j+\frac{1}{2}}]$ as
$$\hat{p}_i^x=\{\hat{x}_i^{\alpha} :\alpha = 0,...,\hat{L}\} \quad and \quad \hat{p}_j^y=\{\hat{y}_j^{\alpha} :\alpha = 0,...,\hat{L}\}.$$
Define
\begin{eqnarray}
&& S_{K_{i,j}}=(p_i^x \otimes\hat{p}_j^y)\cup (\hat{p}_i^x \otimes p_j^y) \cup(p_i^x \otimes p_j^y). \label{S_k}
\end{eqnarray}
After defining the quadrature points, we can now state the theorem given in \cite{JDdetonation1,JDdetonation2} below.
\begin{theorem}
If the numerical approximation $\bm{U}_K(x,y) \in G \quad \forall (x,y)\in S_{K_{ij}}$, where $S_{K_{ij}}$ is defined in \eqref{S_k} for rectangular meshes, then the DG scheme \eqref{DG_convection} is positivity-preserving, namely, $\bar{\bm{U}}_K^{n+1} \in G$ under the time step restriction
\begin{eqnarray}
&& \alpha(\lambda_1+\lambda_2) \leq \hat{\omega}_1 \label{CFL}
\end{eqnarray}
where $\lambda_1 = \frac{\Delta {t}}{\Delta x}$ and $\lambda_2 = \frac{\Delta {t}}{\Delta y}.$ 
\end{theorem}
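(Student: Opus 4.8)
The plan is to follow the Zhang--Shu framework and reduce the genuinely two-dimensional cell-average update \eqref{DG_convection} to a convex combination of point values that each lie in the admissible set $G$, and then exploit the convexity of $G$. Since $G$ is convex (as cited from \cite{JDdetonation1}), it suffices to exhibit $\bar{\bm{U}}_K^{n+1}$ as a convex combination of states lying in $G$ under the restriction \eqref{CFL}; the density and pressure positivity then propagate through the convex combination automatically, and I would treat the positivity of the one-dimensional first-order Lax--Friedrichs update for the reactive Euler system as the established single-cell result of \cite{JDdetonation1,JDdetonation2}.

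First I would rewrite the four edge integrals in \eqref{DG_convection} using the $L$-point Gauss quadrature, so that $\bm{H}$ is evaluated only at points of the form $(x_{i\pm1/2},y_j^{\beta})$ and $(x_i^{\beta},y_{j\pm1/2})$, all contained in $S_{K_{ij}}$. The crucial step is the decomposition of the cell average: using that $\bm{U}_h|_{K}\in Q^k$ and that the $\hat{L}$-point Gauss--Lobatto rule is exact to the required degree, I would represent $\bar{\bm{U}}_{ij}^n$ in two exact ways---once with Gauss--Lobatto nodes in $x$ and Gauss nodes in $y$, once with Gauss nodes in $x$ and Gauss--Lobatto nodes in $y$---and then take their convex combination with weights $\lambda_1/(\lambda_1+\lambda_2)$ and $\lambda_2/(\lambda_1+\lambda_2)$. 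Because the Gauss--Lobatto rule carries the endpoints with weight $\hat{\omega}_1$, this representation isolates exactly the interface values that appear in the numerical fluxes.

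Substituting this decomposition and the quadrature-evaluated fluxes into \eqref{DG_convection} and regrouping, the update $\bar{\bm{U}}_{ij}^{n+1}$ becomes a linear combination of interior point values $\bm{U}_h$ at nodes of $S_{K_{ij}}$ together with one-dimensional first-order Lax--Friedrichs updates of the form $\frac{1}{2}[\bm{U}_L+\bm{U}_R-\frac{1}{\alpha}(f(\bm{U}_R)-f(\bm{U}_L))]$ along each quadrature line. Each such one-dimensional update is itself a convex combination of its two neighboring admissible states precisely when $\alpha(\lambda_1+\lambda_2)\le\hat{\omega}_1$, which is what forces every coefficient to be nonnegative and to sum to one; this is the role of the CFL condition \eqref{CFL}. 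Positivity of $\rho$ and $p$ for every constituent then follows from the established one-dimensional result, and convexity of $G$ transfers it to the full combination.

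It remains to handle the mass fractions. Positivity of $c_i^{n+1}$ for $i=1,\dots,M-1$ follows from the same componentwise argument. Since $c_M=\rho-\sum_{i=1}^{M-1}c_i$ by definition, one has $\sum_{i=1}^{M}c_i^{n+1}=\rho^{n+1}$ and hence $\sum_{i=1}^{M}z_i^{n+1}=1$ automatically; the flux consistency of $\bm{H}$ enters in guaranteeing the positivity of the remaining fraction, because consistency yields $\sum_{i=1}^{M}h_i=h_\rho$, so the effective flux for $c_M$, namely $h_\rho-\sum_{i=1}^{M-1}h_i$, is exactly the consistent Lax--Friedrichs flux for the $c_M$ transport equation and its update is again an admissible convex combination, giving $c_M^{n+1}>0$. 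Combined with $\sum_{i=1}^{M}z_i^{n+1}=1$ this produces $0<z_i^{n+1}<1$ for every $i$. I expect the main obstacle to be the cell-average decomposition together with the bookkeeping that reduces the two-dimensional update to one-dimensional Lax--Friedrichs subproblems with all coefficients nonnegative under \eqref{CFL}; once that reduction is in place, convexity of $G$ and flux consistency close the argument.
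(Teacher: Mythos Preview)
Your proposal is correct and follows the standard Zhang--Shu decomposition used in the references \cite{JDdetonation1,JDdetonation2,rectangular mesh}. Note, however, that the paper itself does not supply a proof of this theorem: it is stated as a result quoted from \cite{JDdetonation1,JDdetonation2} (``we can now state the theorem given in \cite{JDdetonation1,JDdetonation2} below''), so there is no in-paper argument to compare against. Your sketch---the mixed Gauss/Gauss--Lobatto representation of the cell average with weights $\lambda_1/(\lambda_1+\lambda_2)$ and $\lambda_2/(\lambda_1+\lambda_2)$, the reduction to one-dimensional first-order Lax--Friedrichs subproblems, and the flux-consistency argument for $c_M$---is precisely the machinery those cited works employ, so you are effectively reconstructing the proof the paper defers to.
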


\subsection{Bound preserving technique for MP method}
In this subsection, we focus on the bound preserving technique for MP time integrations. We apply DG scheme as in section 3.1 to the spatial discretization except source term. We use $k$-point Gaussian quadrature for the integral in each direction in the convection terms and take Lagrangian basis so that the point values at the quadrature points are known. Therefore, we denote $c_{k,i,l}=c_{k,i,l}(t)$ as the point value at the $l$-th Gaussian quadrature point of the $i$-th species at the $k$-th cell after the spatial discretization, and define $c_{k,l}=(c_{k,1,l},\cdots,c_{k,M,l})^T$. To include the convection term and construct the semi-discrete scheme, we take the test function to be the $l$-th Lagrangian basis and formulate the system of ODEs base on the destruction and production equation \eqref{equation2.1} in the following form:
\begin{eqnarray}
&& \frac{dc_{k,i,l}}{dt}=F_{k,i,l}(c)+P_{k,i,l}(c)-D_{k,i,l}(c), \quad k=1,\cdots,N^2, \quad i=1,\cdots,M. \label{equation2.20}
\end{eqnarray}
 $F_{k,i,l}=F_{k,i,l}(c)$ denotes the contributions of the convection terms after spatial discretizations in the PDEs. The production and destruction terms are $P_{k,i,l}=P_{k,i,l}(c)=\sum_{j=1}^{M}p_{i,j}(c_{k,l})$ and $D_{k,i,l}=D_{k,i,l}(c)=\sum_{j=1}^{M} d_{i,j}(c_{k,l})$ which satisfy
$$p_{i,j}(c_{k,l})=d_{j,i}(c_{k,l})
\geq0, \quad \forall i,j,k,l \quad and \quad c\geq0$$
For simplicity of presentation, we drop the subscript $l$ in the rest of this section. We make the following assumption on \eqref{equation2.20} :\\
\textbf{Assumption 2.1.} The Euler forward method for the convection term satisfies the positivity-preserving property: if $c_{k,i}^n \geq 0$ for all $k,i$, then there exists $\Delta t_0>0$ such that
$$c_{k,i}^n+\Delta tF_{k,i}(c^n)\geq0$$
for all $k,i$ and  $\Delta t \leq \Delta t_0$.

The bound-preserving technique can be extended to SSP RK/MS methods which are convex combinations of forward Euler. For simplicity, we consider the second-order MPMS scheme \eqref{equation2.6} only and incorporate the convection term $F_{k,i}$. The fully discrete scheme is given as
\begin{eqnarray}
c_{k,i}^{n+1}=\frac{1}{4} c_{k,i}^{n-2} +\frac{3}{4} c_{k,i}^n +\frac{3}{2}\Delta t F_{k,i}(c^n)+\frac{3}{2}\Delta t\left(\sum_j p_{k,i,j}^n \frac{c_{k,j}^{n+1}}{\sigma_{k,j}}-\sum_{j}d_{k,i,j}^n \frac{c_{k,i}^{n+1}}{\sigma_{k,i}} \right) \label{MPMS2}
\end{eqnarray}
where
$$\sigma_i = (c_i^n)^s(c_i^{n-1})^r(c_i^{n-2})^{1-r-s}.$$
Clearly, the scheme is positivity-preserving if the time step satisfies
$$\Delta t \leq \frac{1}{2}\Delta t_0.$$

Similarly for the third-order MP MS scheme \eqref{equation2.6}, incorporating the convection term $F_{k,i}$ we have
\begin{eqnarray}
&& c_{k,i}^{n+1} = \frac{11}{27}c_{k,i}^{n-3} + \frac{16}{27}c_{k,i}^{n} + \Delta t(\frac{4}{9}F_{k,i}(c^{n-3})+\frac{16}{9}F_{k,i}(c^n)) \notag \\ \:\:\:
&& +\Delta t(\sum_{j}(\frac{4}{9}p_{k,i,j}^{n-3} + \frac{16}{9}p_{k,i,j}^{n}) \frac{c_{k,j}^{n+1}}{\sigma_{j}} - (\frac{4}{9}d_{k,i,j}^{n-3} + \frac{16}{9}d_{k,i,j}^{n})\frac{c_{k,i}^{n+1}}{\sigma_{i}}) \label{MPMS3}
\end{eqnarray}
where
$$\sigma_i = (c_i^n)^s(c_i^{n-1})^r(c_i^{n-2})^{p}(c_i^{n-3})^{1-r-s-p}.$$
The scheme is positivity-preserving if the time step satisfies $$\Delta t \leq \frac{1}{3}\Delta t_0.$$

\subsection{The limiter}
In this subsection, We discuss the limiter to be used to deal with negative numerical approximations with positive numerical cell averages.

To enforce the positivity, a suitable limiter {\cite{limiter}} can be added to keep density, pressure and mass fraction being within the physical bounds. Following \cite{JDdetonation1,JDdetonation2}, the main idea of the positivity-preserving limiter is to modify the DG polynomial $w_K(x)$ into polynomial
$$
\tilde{w}_K=\theta_K (w_K-\bar{w}_K)+\bar{w}_k
$$
where $\bar{w}_K\in G$ is the cell average and $\theta_K\in [0,1]$. {This limiter does not degenerate the accuracy of the DG polynomial $w_K(x)$ \cite{limiter_proof}.}\\
The algorithm of the limiter on each fixed element $K$ is given below:
\begin{enumerate}
\item For a small number $\epsilon = 10^{-13}$, if $\bar{\rho} > \epsilon$, we proceed to the next step. Otherwise, we simply take $U=\bar{U}$
\item Enforce the positivity of the density $\rho$: Compute the minimum value as
$$ \rho_{min} = \min_{(x,y)\in S_{K}} \rho(x,y), $$ where $S_K$ is defined in \eqref{S_k}.
If $\rho_{min} < 0$, then take
$$\hat{\rho}=\bar{\rho}+\theta_K (\rho-\bar{\rho}),$$
$$\hat{c}_i=\bar{c}_i+\theta_K (c_i -\bar{c}_i), i=1,...,M-1$$
with
$$\theta_K = \frac{\bar{\rho}-\epsilon}{\bar{\rho}-\rho_{min}}.$$
{Here $\hat{r}_M=\bar{r}_M+\theta(r_M -\bar{r}_M)$ to keep $\sum_i^M  \hat{r}_i = \hat{\rho}$. }
\item Enforce positivity of the mass fractions:
For $1\leq i \leq M$, define $\hat{S}_i=\{(x,y)\in S_K: \hat{c}_i(x,y) \leq 0\}$. Take
\begin{eqnarray}
\tilde{c}_i = \hat{c}_i +\theta (\frac{\bar{c}_i}{\bar{\rho}} \hat{\rho}-\hat{c}_i),\ 1\leq i\leq M-1, \notag \\ \:\:\:
\theta=\max_{1\leq i\leq M} \max_{(x,y)\in \hat{S}_i} \{\frac{-\hat{c}_i(x,y) \bar{\rho}}{\bar{c}_i \hat{\rho}(x,y)-\hat{c}_i(x,y) \bar{\rho}},0\}.
\end{eqnarray}

\item Modify the pressure:
Denote $\tilde{{\bm{U}}}=( \hat{\rho},m ,n,E,\tilde{c}_1,...,\tilde{c}_{M-1} )^T$. For each $\bm{x} \in S$, if $\tilde{\bm{U}} \in G$, then take $\theta _{\bm{x}}=1$. Otherwise, take
$$\theta_{\bm{x}}=\frac{p(\bar{\bm{U}})}{p(\bar{\bm{U}})-p(\tilde{\bm{U}}(x))}. $$
Then, we use
$$\bm{U}^{new} = \bar{\bm{U}} +\theta(\tilde{\bm{U}}- \bar{\bm{U}}), \quad \theta=\min_{\bm{x} \in S_K} \theta_{\bm{x}}, $$
as the new DG approximation. 
\end{enumerate}

\subsection{Full algorithm and the main theorem}
We have discussed the DG scheme and MP MS time integration. Putting them together, we have a conservative and positivity-preserving scheme for solving Euler equations.
 {Now we demonstrate the full algorithm of our method in the flow chart. We use the MPMS2 method on rectangular meshes as an example:}
\begin{enumerate}
    \item Consider the convection term only
    $$
    {\bm{U}}_{k,i}^{(1)}=\frac{1}{4} \bm{U}_{k,i}^{n-2} +\frac{3}{4} \bm{U}_{k,i}^n +\frac{3}{2}\Delta t \bm{F}_{k,i}(\bm{U}^n),
    $$
    where $\bm{U}=(\rho z_1,\cdots,\rho z_M,m,E)^T.$
    \item Apply limiter {in Section 3.4} to polynomials at the Gaussian points with $S_{k_{1}}=p_i^x \otimes  p_j^y$.
    \item Use Patankar to deal with source 
    $$
    c_{k,i}^{n+1}=c_{k,i}^{(1)}+\frac{3}{2}\Delta t\left(\sum_j p_{k,i,j}^n \frac{c_{k,j}^{n+1}}{\sigma_{k,j}}-\sum_{j}d_{k,i,j}^n \frac{c_{k,i}^{n+1}}{\sigma_{k,i}} \right),
    $$
    where $c=(\rho z_1,\cdots,\rho z_M)^T.$
    
    \item Apply limiter {in Section 3.4} to polynomials at points in $S_{k_{2}}=(\hat{p}_i^x \otimes p_j^y) \cup(p_i^x \otimes \hat{p}_j^y)$.
    \item Let $\bm{U}^{n-2}=\bm{U}^{n-1}$, $\bm{U}^{n-1}=\bm{U}^n$, and $\bm{U}^n=\bm{U}^{n+1}$, then
    go back to step 1 and repeat the process 1-4 until the final time is reached.
\end{enumerate}
\noindent
{\textbf{Remark 3.1.} The reason we apply the limiter twice in the above flow chart is because each limiter aims for different set of points. In Step 2, the limiter is applied on the quadrature points. In this way, we ensure that the point values on quadrature points are kept positive in each cell $K$. Fortunately, Patankar in Step 3 will not destroy the positivity of the cell average and the quadrature point values, but it may not yield positive numerical approximations on the cell interfaces. So we apply the limiter again in Step 4 to those points. The numerical approximations are guaranteed to be in the admissible set.}
\\
\noindent
The main theorem is stated below.

\begin{theorem}
Consider the DG scheme \eqref{DG form} coupled with the second-order MP MS method \eqref{MPMS2} or the third-order one \eqref{MPMS3}.The schemes are bound preserving: if $\bm{U}_{k,i}^n \in G$ for all $1\leq k\leq N^2$,$1\leq i\leq M$, then $\bm{U}_{k,i}^{n+1} \in G$ under the condition $\Delta t \leq \min_i \frac{\alpha_i}{\beta_i}\Delta \tilde{t}$ with $\Delta \tilde{t}$ satisfies the condition in \eqref{CFL}. 
\end{theorem}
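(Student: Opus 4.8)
The plan is to propagate the admissibility $\bm{U}^n\in G$ through the four stages of the full algorithm of Section 3.5, using three ingredients already in hand: the positivity-preserving property of the convection-only DG update (Theorem 3.2), the unconditional positivity (Lemma 2.2) and exact conservation (Lemma 2.1) of the MP MS source update, and the two limiters of Section 3.4, which convert cell-average admissibility into point-value positivity without altering the cell average and while maintaining $\sum_{i=1}^M c_i=\rho$.

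For the convection stage I would rewrite the multistep update as a convex combination of forward-Euler stages: for \eqref{MPMS2} one has $\bm{U}^{(1)}=\tfrac14\bm{U}^{n-2}+\tfrac34\bigl(\bm{U}^n+2\Delta t\,\bm{F}(\bm{U}^n)\bigr)$, while for \eqref{MPMS3} the analogous splitting has effective sub-step sizes $\tfrac{12}{11}\Delta t$ and $3\Delta t$. The hypothesis $\Delta t\le\min_i\frac{\alpha_i}{\beta_i}\Delta\tilde t$ is exactly what forces each nontrivial sub-step to obey the CFL restriction \eqref{CFL}, the binding ratio $\alpha_i/\beta_i$ being $\tfrac12$ for the second-order scheme and $\tfrac13$ for the third-order one. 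Each forward-Euler sub-step then keeps the cell average in $G$ by Theorem 3.2, and since $G$ is convex the convex combination yields $\bar{\bm{U}}^{(1)}_K\in G$. Applying the first limiter at the Gaussian points $p_i^x\otimes p_j^y$ gives strictly positive point values of $\rho$ and of each $c_i$ there, leaves the admissible cell average unchanged, and, because the limiter is constructed to enforce $\sum_i\hat r_i=\hat\rho$, preserves $\sum_{i=1}^M c_i=\rho$ pointwise.

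For the source stage I would apply the MP MS update \eqref{equation2.6} independently at each Gaussian point. Lemma 2.2 gives $c^{n+1}_{k,i}>0$ for every $i$ from the positive inputs, with no extra time-step constraint, and summing the update over $i$ with $p_{ij}=d_{ji}$ shows the source contribution cancels, so that $\sum_i c^{n+1}_{k,i}=\sum_i c^{(1)}_{k,i}=\rho$ at each Gaussian point. This is the decisive step: positive $c_i$ summing to $\rho$ force $z_i=c_i/\rho\in(0,1)$ with $\sum_i z_i=1$ automatically, which is precisely the upper bound that a bare positivity argument would miss. Since the source leaves $\rho,m,n,E$ fixed, the cell averages of the conserved Euler variables agree with those of $\bm{U}^{(1)}$, while the new cell averages of $\rho$ and of each $c_i$ are positive, being positive Gauss-weighted sums of the positive point values; hence $\bar{\bm{U}}^{n+1}_K\in G$. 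The second limiter is then applied at the interface points $S_{k_2}=(\hat p_i^x\otimes p_j^y)\cup(p_i^x\otimes\hat p_j^y)$ to restore point-value admissibility, including positive pressure through the pressure step of the limiter, as required to evaluate the next flux, again without changing the cell average.

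The main obstacle is the bookkeeping that couples the two notions of positivity in play: Theorem 3.2 produces positive cell averages out of positive point values, whereas the MP update produces positive point values out of positive point values, and the two limiters are what reconcile these within a single step. The most delicate point inside this loop is to verify that the identity $\sum_{i=1}^M c_i=\rho$ survives every operation at each quadrature point---through the consistent Lax--Friedrichs flux (Definition 3.1) in the convection stage, through the sum-preserving construction of the limiter, and through the $p_{ij}=d_{ji}$ cancellation in the source stage---since it is this identity, and not positivity alone, that yields the upper bound $z_i\le1$. The other point requiring care is the pressure bound, the sole nonlinear constraint in $G$: at the cell-average level it is controlled by the convexity of $G$ together with the convection update, and pointwise by the limiter's pressure step, so one must confirm that the source-induced change of composition does not spoil it.
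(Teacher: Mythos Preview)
Your proposal is correct and follows essentially the same route as the paper's proof: reduce the convection update to a convex combination of forward-Euler steps so that Theorem~3.2 and the convexity of $G$ give admissible cell averages, use the limiter to restore pointwise positivity at Gaussian nodes, invoke the unconditional positivity and conservation of the MP source update at those nodes, and use the identity $\sum_i c_i=\rho$ (via consistent fluxes and the $p_{ij}=d_{ji}$ cancellation) to obtain the upper bound $z_i\le 1$. Your write-up is in fact more explicit than the paper's on the SSP decomposition, on why the ratio $\min_i\alpha_i/\beta_i$ appears, and on the distinct roles of the two limiters; and you flag a genuine subtlety---that the source-induced change of composition could in principle affect the pressure through the equation of state---which the paper's proof does not comment on either.
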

\begin{proof}
Let us first prove the positivity preserving property for the DG scheme with the first order explicit Euler forward time integration. Without source term, the equation satisfied by cell average is given in \eqref{cell average}. The positivity-preserving technique yields positive numerical cell average. Then applying slope limiter, we obtain the positive numerical approximation at Gaussian points. The momentum and energy equations are trivial since there is no source for them. For the equations of mass fractions, we approximate the source terms using MP trick
\begin{eqnarray}
\sum_{j} p_{ij}^n \frac{c_j^{n+1}}{\sigma_j} - \sum_{j} d_{ij}^n \frac{c_i^{n+1}}{\sigma_i}. \label{P-D}
\end{eqnarray}
The goal is to show that $\bar{c}_k^{n+1}\geq0$. A sufficient condition is to obtain positive numerical approximations at the Gaussian quadrature points and this can be achieved by the MP time integration. We can refer to \cite{modified Patankar} for the proof of positivity of the technique.
Extend to second and third order multi-step time discretization will keep the positivity since they are convex combinations of Euler forward scheme.

{To start the discussion about the upper bound of the mass fraction $z_i\leq 1$, we first subtract \eqref{1.1d}-\eqref{1.1e} from \eqref{1.1a} to obtain a new equation
\begin{eqnarray}
(c_M)_t+(mz_M)_x+(nz_M)_y=s_M \label{ghost eqn}
\end{eqnarray}}
Then we apply the positivity-preserving technique to each $c_{k,i}^n$ {for $i=1,\dots,M$}.
Therefore, $c_i>0 (z_i>0)$ are obtained at all time levels. Moreover, we use consistent fluxes and conservative time integration to preserve the total mass $\sum_{i=1}^M c_i=\rho$. As a result, the numerical approximation of $z_i$ is bounded in interval $[0,1]$.
\end{proof}

\section{Numerical examples} In this section, we provide numerical experiments to show the performance of the numerical scheme. %

\subsection{Test of the ODE solver}
In this part, we test the accuracy of MP MS methods in solving linear and nonlinear ODEs
\begin{ex}\label{ex1}
This example is to test the accuracy on a linear problem:\\
$$
\frac{dc_1}{dt}=c_2-ac_1,
$$
$$
\frac{dc_2}{dt}=ac_1-c_2,
$$
with constant $a>0$, and initial value $c_1(0)=c_1^0$, $c_2(0)=c_2^0$. The exact solutions are 
$$
c_1(t)=(1+b\exp(-(a+1)t))c_1^\infty, \quad c_2(t)=c_1^0+c_2^0-c_1(t),
$$
with the parameters $c_1^\infty$ and $b$ determined by
$$
c_1^\infty=\frac{c_1^0+c_2^0}{a+1}, \quad b=\frac{c_1^0}{c_1^\infty}-1.
$$
In the numerical experiment, we take $c_1^0=4.5$, $c_2^0=3.2$, $a=2.7$ and the final time $t=1$.
\end{ex}

\begin{table}[!htbp]
\begin{center}
\begin{tabular}{c|c|c|c|c}
\hline
&\multicolumn{2}{c|}{MPMS2 }&\multicolumn{2}{c}{MPMS3 }   \\
\hline
$\Delta t$ &  Error & Order&  Error & Order\\
\hline
1/20  &1.88e-02& -- &1.03e-03& --  \\
1/40 & 4.56e-03&2.04&1.27e-04&3.02\\
1/80 &1.09e-03&2.07&1.58e-05&3.00\\
1/160 &2.75e-04&1.99&2.04e-06&2.96\\
1/320 &6.97e-05&1.98&2.61e-07&2.97\\
\hline
\end{tabular}
\caption{\label{table:1}Example \ref{ex1}: Accuracy test of MPMS2 and MPMS3 for linear ODEs}
\end{center}
\end{table}
The errors between the numerical and exact solutions at the final time are listed in Table \ref{table:1}. From the table, we can observe optimal convergence rates for second and third order MP MS methods.

\begin{ex}\label{ex2}
This example is to test the accuracy for solving a nonlinear problem given in \cite{MPRK2}:\\
\begin{eqnarray*}
\frac{dc_1}{dt}&=&F_1(c)-\frac{c_1c_2}{c_1+1},\\
\frac{dc_2}{dt}&=&F_2(c)+\frac{c_1c_2}{c_1+1}-ac_2,\\
\frac{dc_3}{dt}&=&F_3(c)+ac_2,
\end{eqnarray*}
where $(F_1(c), F_2(c), F_3(c))$ denotes "convection terms".

To express this system of ODEs in the form of production-destruction equations, we set
$$
p_{21}=d_{12}(c)=\frac{c_1 c_2}{c_1+1}, \quad p_{32}(c)=d_{23}(c)=ac_2,
$$
and $p_{ij}=d_{ij}=1$ for other sets of $i$, $j$. The initial conditions are set as $c_1^0=9.98$, $c_2^0=0.01$ and $c_3^0=0.01$. The convection terms are
$$
(F_1(c), F_2(c), F_3(c))=\left(c_1c_2c_3, \frac{c_3}{c_2}, c_2c_2c_3^2\right).
$$
The final time $t=1$ and the parameter $a=1$.
\end{ex}
The errors between the numerical and exact solutions at the final time are listed in Table \ref{table:2}, from which we can also observe the optimal convergence rates.

\begin{table}[!htbp]
\begin{center}
\begin{tabular}{c|c|c|c|c}
\hline
&\multicolumn{2}{c|}{MPMS2 }&\multicolumn{2}{c}{MPMS3 }   \\
\hline
$\Delta t$ &  Error & Order&  Error & Order\\
\hline
1/20  &2.13e-03& -- &3.67e-04& --  \\
1/40 &4.43e-04&2.26&3.68e-05&3.32\\
1/80 &1.14e-04&1.96&4.115e-06&3.16\\
1/160 &2.72e-05&2.06&4.87e-07&3.07\\
1/320 &6.76e-06&2.01&5.924e-08&3.04\\
\hline
\end{tabular}
\caption{\label{table:2}Example \ref{ex2}: Accuracy test of MPMS2 and MPMS3 for nonlinear ODEs}
\end{center}
\end{table}

\noindent
{\textbf{Remark 4.1.} The parameters $s$, $r$, and $p$ were chosen based on the relationships in \eqref{sufficient cond} and \eqref{sufficient cond1} for the numerical schemes to be at their desired accuracy. We tested for different values of $s$ from $-1$ to $5$ hence different values of $r$ and $p$ with fixed time step in example 4.1 and example 4.2. We want to know the influence of different set of parameters on the error. The results are the four plots given in Figure \ref{parameter}. At this point, we could only observe and conclude that there seems to be a set of parameters which gives the smallest error. This is an interesting topic and we leave it in the future work.}
\\
\begin{figure}[!htbp]
\subfigure[Example 4.1 using MPMS2]{\includegraphics[scale=0.215]{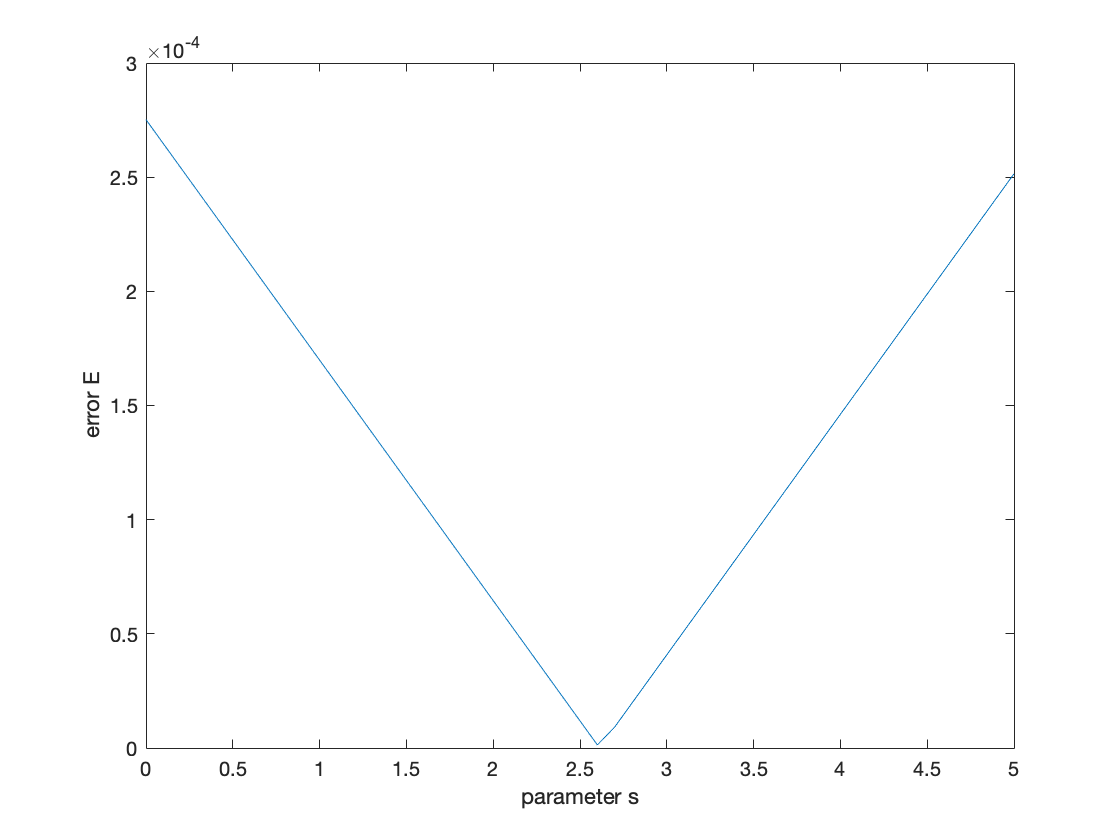}}
\subfigure[Example 4.1 using MPMS3]{\includegraphics[scale=0.215]{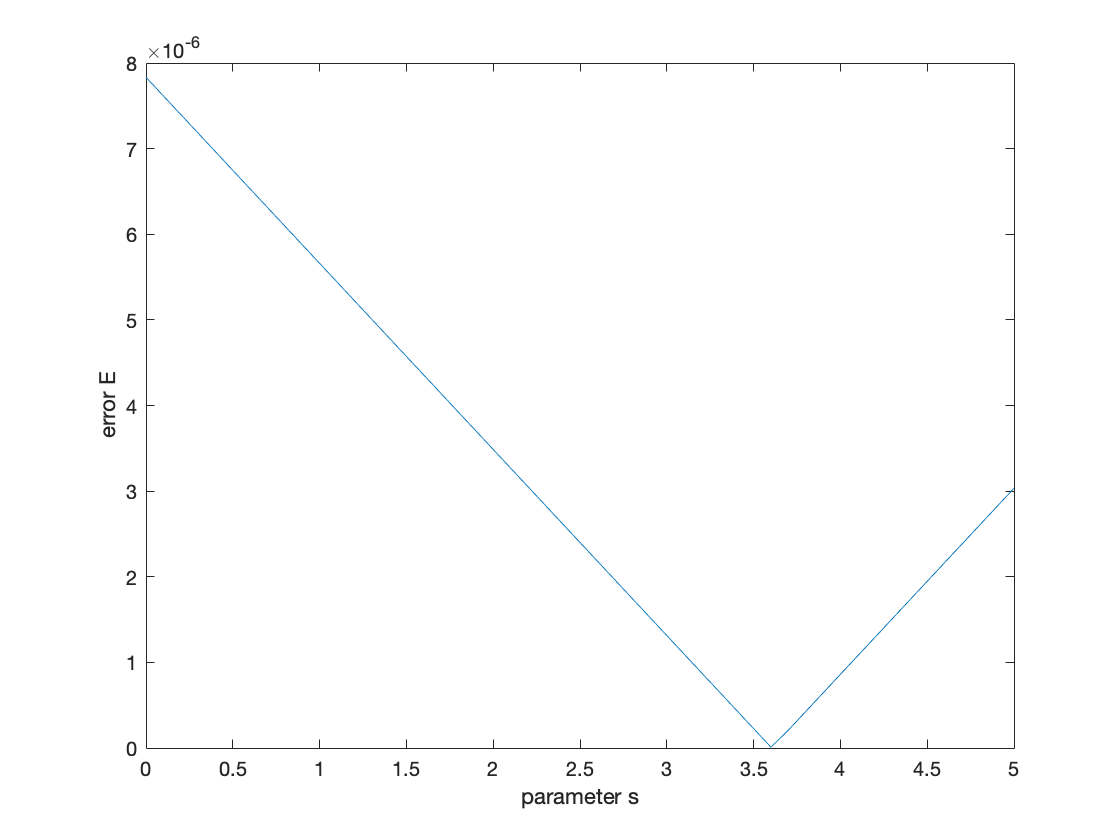}}\\
\subfigure[Example 4.2 using MPMS2]{\includegraphics[scale=0.215]{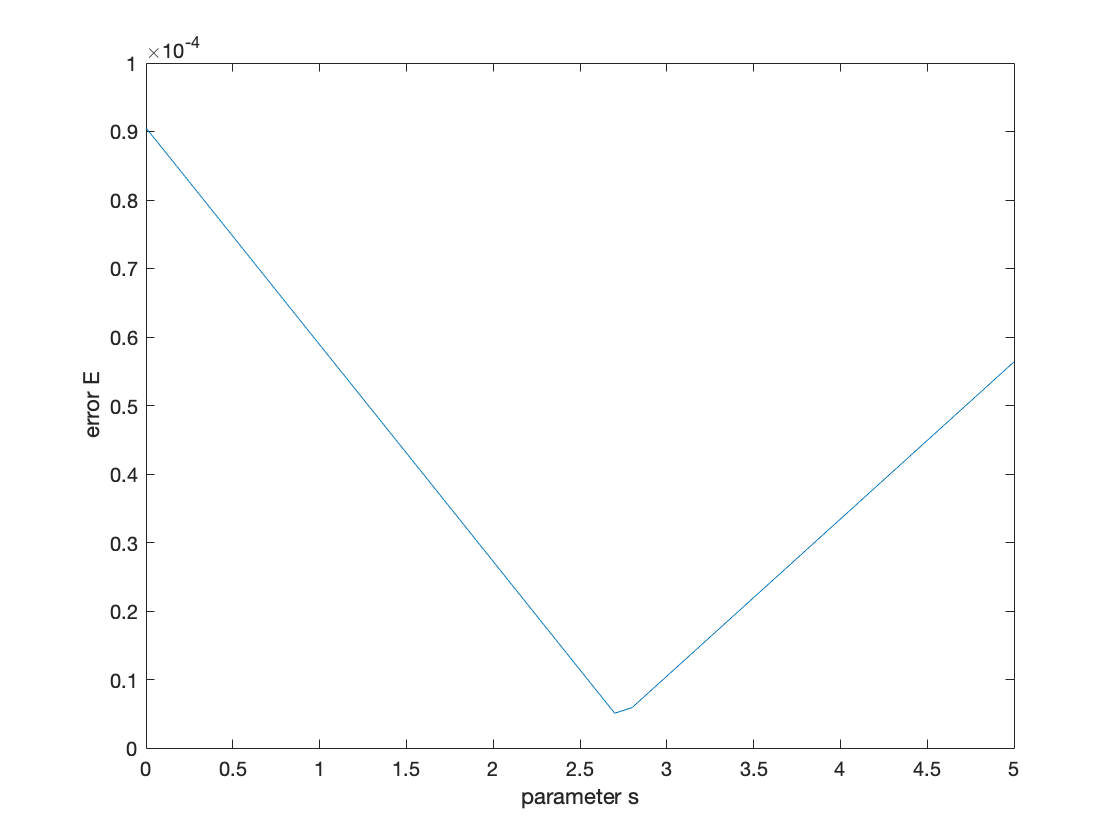}}
\subfigure[Example 4.2 using MPMS3]{\includegraphics[scale=0.215]{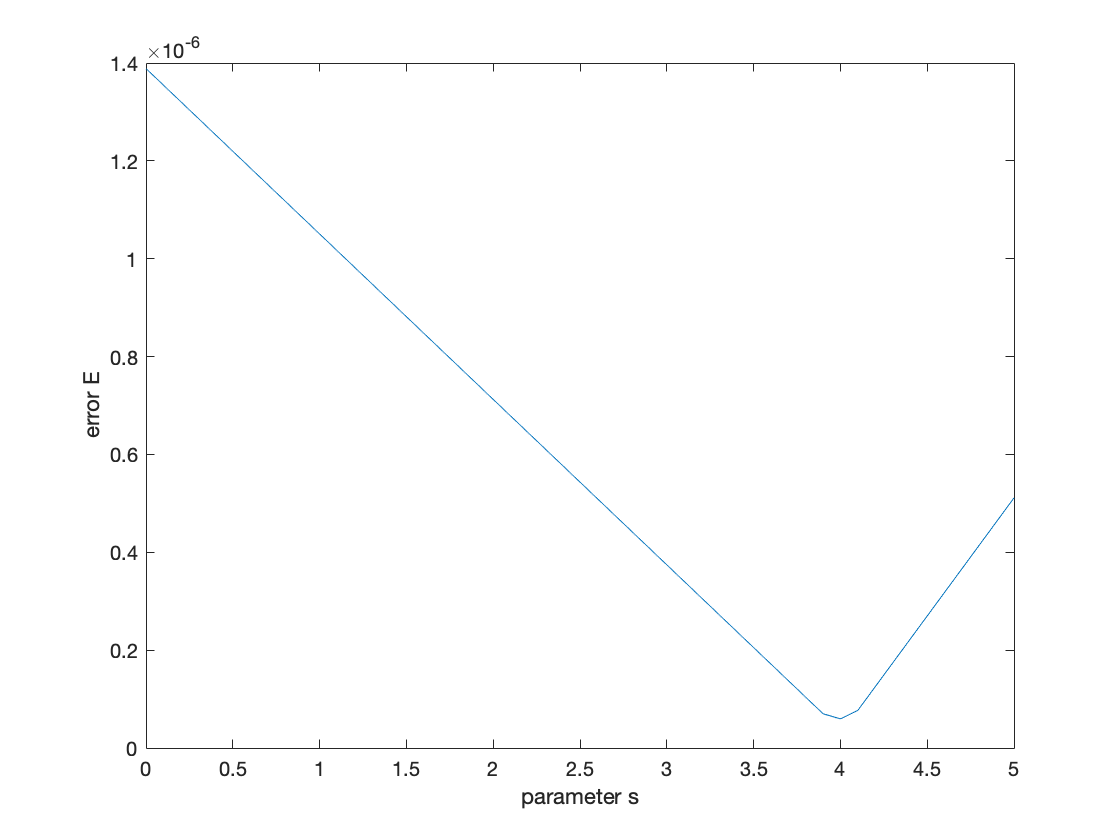}}\\
\caption{Parameter $s$ and error plots}
\label{parameter}
\end{figure} 

\subsection{Euler Equations with Three species Reactions and General Equation of State}
\begin{ex} \label{ex3}
We solve the three species model of the one-dimensional Euler system with a more general equation of state in \cite{1D example} 
\begin{eqnarray*}
U&=&(\rho_1, \rho_2, \rho_3, \rho u, E)^T,\\
F(U)&=&(\rho_1 u, \rho_2 u, \rho_3 u, \rho u^2+P, (E+p)u)^T,\\
S(U)&=&(2M_1\omega, -M_2\omega,0 ,0, 0)^T.
\end{eqnarray*}
The rate of the chemical reaction is given by
$$
\omega=\left(k_f(T)\frac{\rho_2}{M_2}-k_b(T)\left(\frac{\rho_1}{M_1}\right)^2\right)\sum_{s=1}^3 \frac{\rho_s}{M_s}, \quad k_f=CT^{-2}e^{-E/T},
$$
$$
k_b=k_f /exp(b_1+b_2logz +b_3z + b_4 z^2 + b_5z^3),\quad z=10000/T.
$$
The parameters are $M_1=0.016$, $M_2=0.032$, $M_3=0.028$, $q_1=1.558 \times 10^7$, $R=8.31447215$, $C_0=2.9\times10^{17} m^3$, $E_0=59750 K$, and $b_1 = 2.855$, $b_2=0.988$, $b_3=-6.181$, $b_4=-0.023$, $b_5 = -0.001$. For this model, we split the source terms $2M_1\omega$ into two parts:
$$
2M_1\omega=\omega_+ - \omega_-,
$$
with 
$$
\omega_+=2M_1k_f(T)\frac{\rho_2}{M_2}\sum_{s=1}^3\frac{\rho_s}{M_s}\geq0 \quad and \quad \omega_-=2M_1k_b(T)\left(\frac{\rho_1}{M_1}\right)^2\sum_{s=1}^3\frac{\rho_s}{M_s}\geq0.
$$
The production and destruction terms are
$$
p_{2,1}=d_{1,2}=\omega_-\geq0, \quad p_{1,2}=d_{2,1}=\omega_+\geq0.
$$
The eigenvalues of the Jacobian are $(u,u,u,u-c,u+c)$ where $c=\sqrt{\frac{\gamma p}{\rho}}$ with $\gamma=1+\frac{p}{T\sum_{s=1}^3 \rho_s e_s^{'}(T)}$. The initial conditions are: the densities $\rho_1$,$\rho_2$ and $\rho_3$ are $5.251896311257204 \times 10^{-5}$, $3.748071704863518\times 10^{-5}$,$2.962489471973072\times 10^{-4}$ on the left, and $8.341661837019181 \times 10^{-8}$, $9.455418692098664\times 10^{-11}$, $2.748909430004963\times 10^{-7}$ on the right. The velocities are zero. The pressures are 1000 on the left and 1 on the right. The final time is $t=0.0001$. The densities, velocity and pressure are presented in Figure\ref{4.3_MPRK2} for MPRK2, Figure \ref{4.3_MPMS2} for MPMS2 and Figure \ref{4.3_MPMS3} for MPMS3 methods. We can observe some oscillations in the numerical approximations. This is because we did not apply oscillation suppressors such as WENO algorithm in the numerical scheme. Though the oscillations exist, the bound-preserving technique is enough to stabilize our numerical scheme.

\end{ex}

\begin{figure}[!htbp]
\subfigure[$ \rho_1$]{\includegraphics[width = 3in]{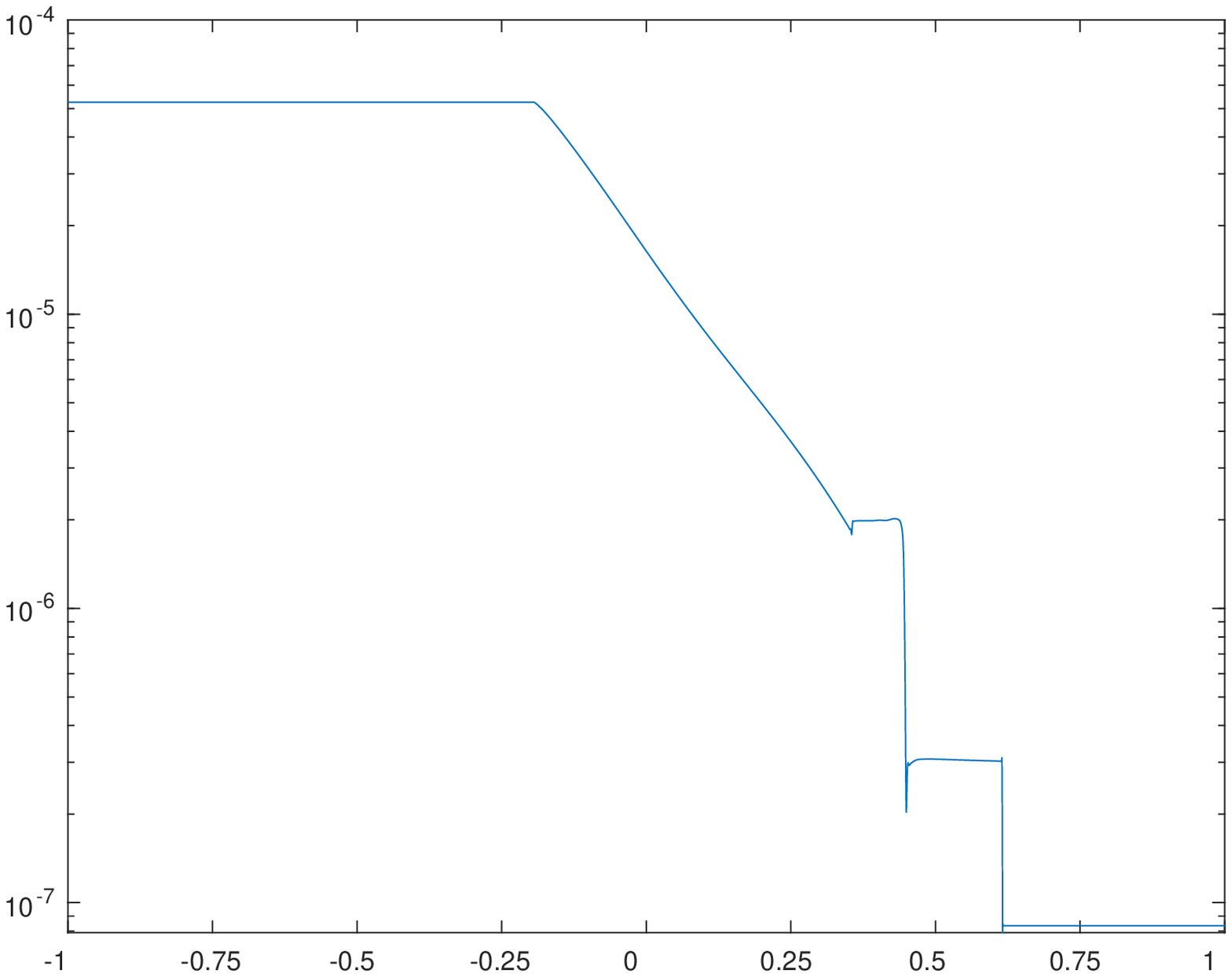}}
\subfigure[$ \rho_2$]{\includegraphics[width = 3in]{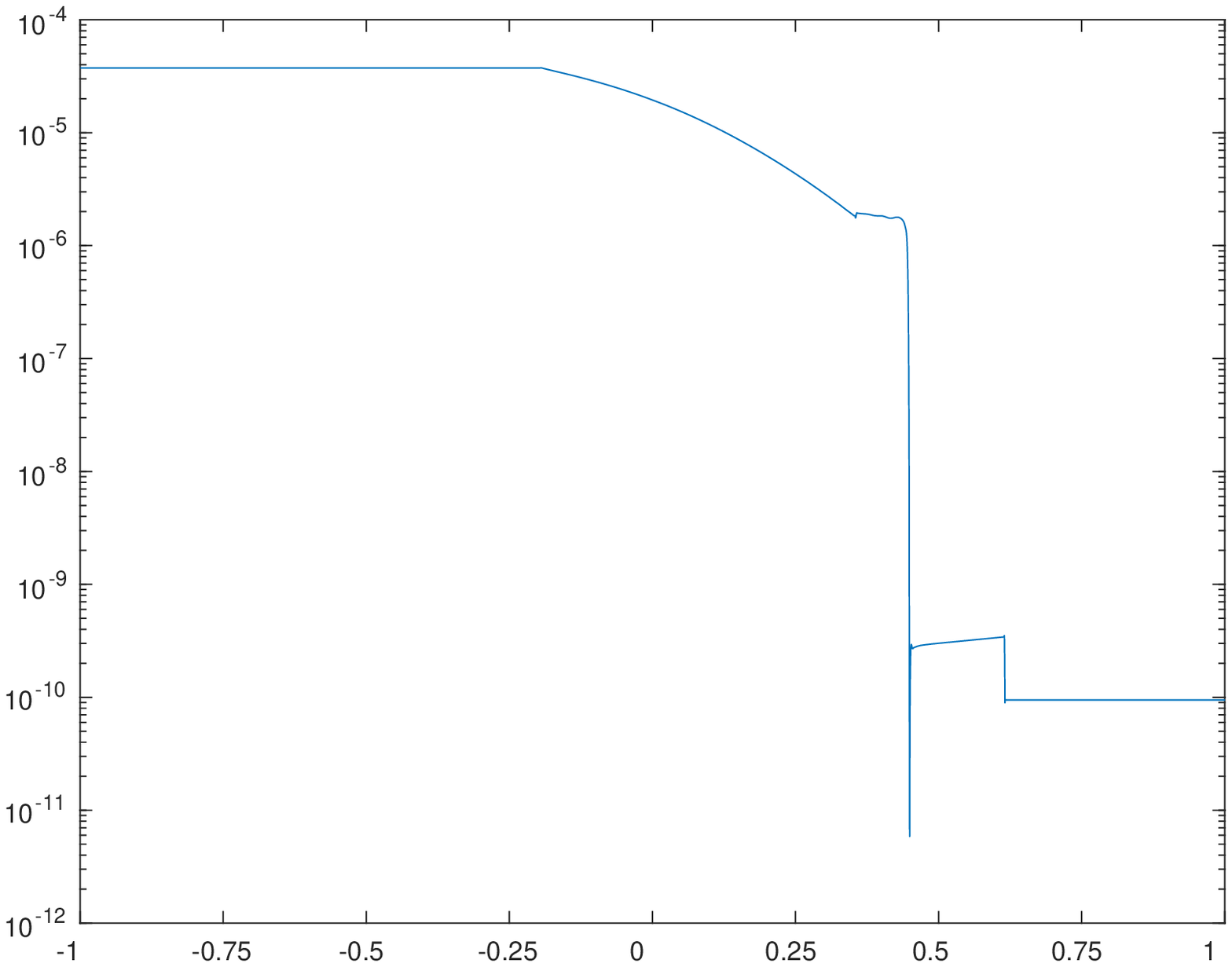}}\\
\subfigure[$ \rho_3$]{\includegraphics[width = 3in]{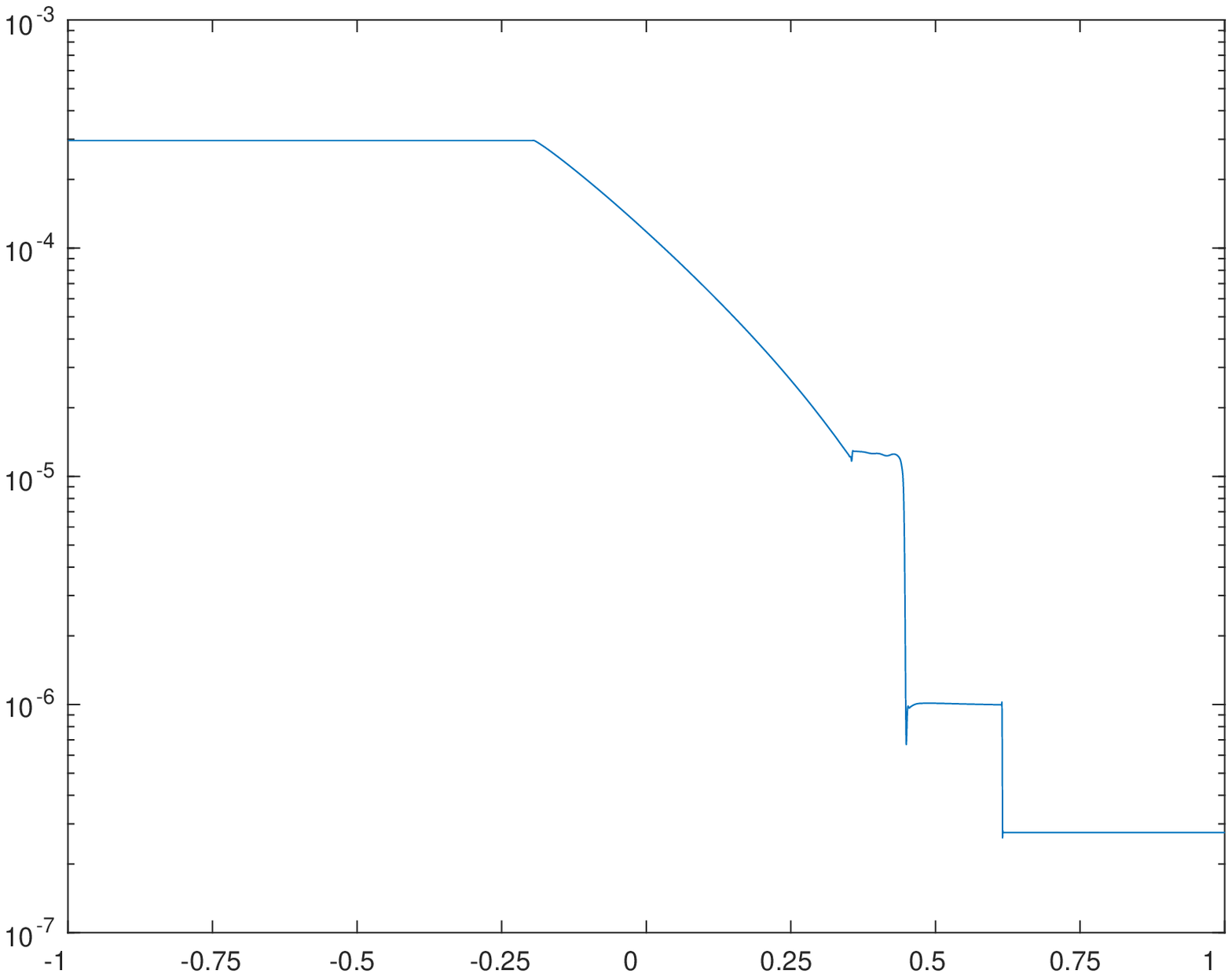}}
\subfigure[$ u$]{\includegraphics[width = 3in]{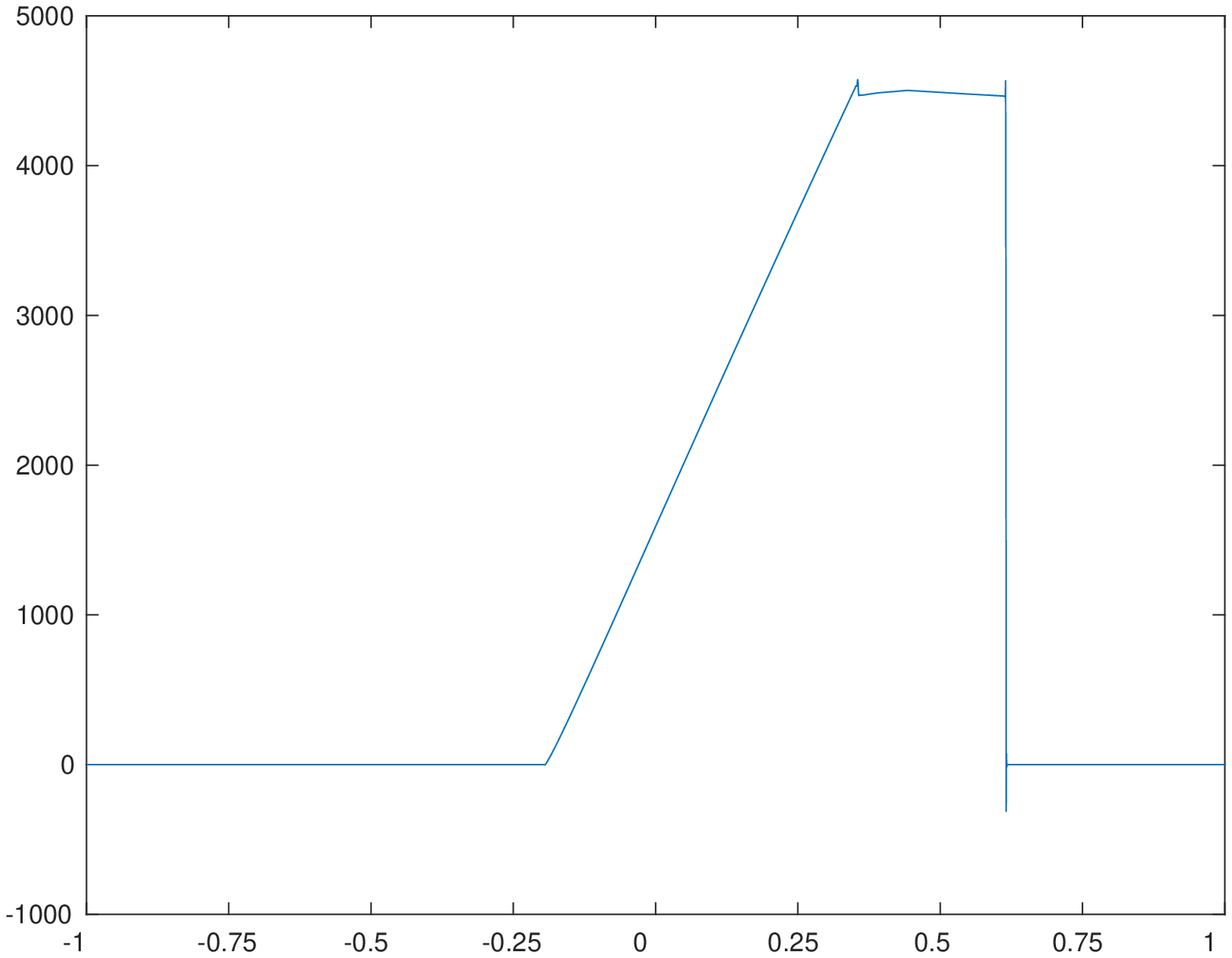}}\\
\subfigure[$ p$]{\includegraphics[width = 3in]{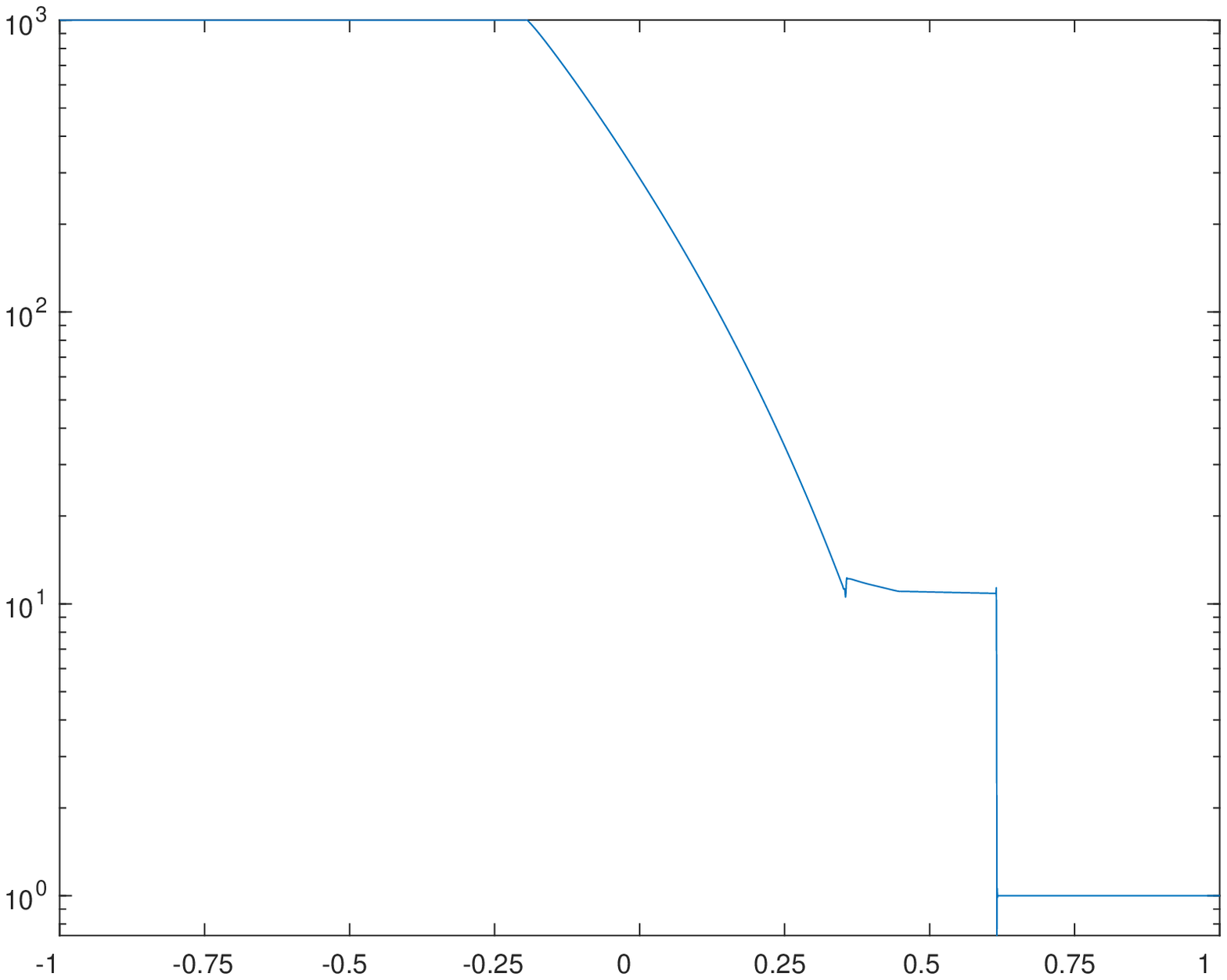}}
\caption{Example 4.3: three species reaction problem at t=0.0001 using MPRK2}
\label{4.3_MPRK2}
\end{figure}

\begin{figure}[!htbp]
\subfigure[$ \rho_1$]{\includegraphics[width = 3in]{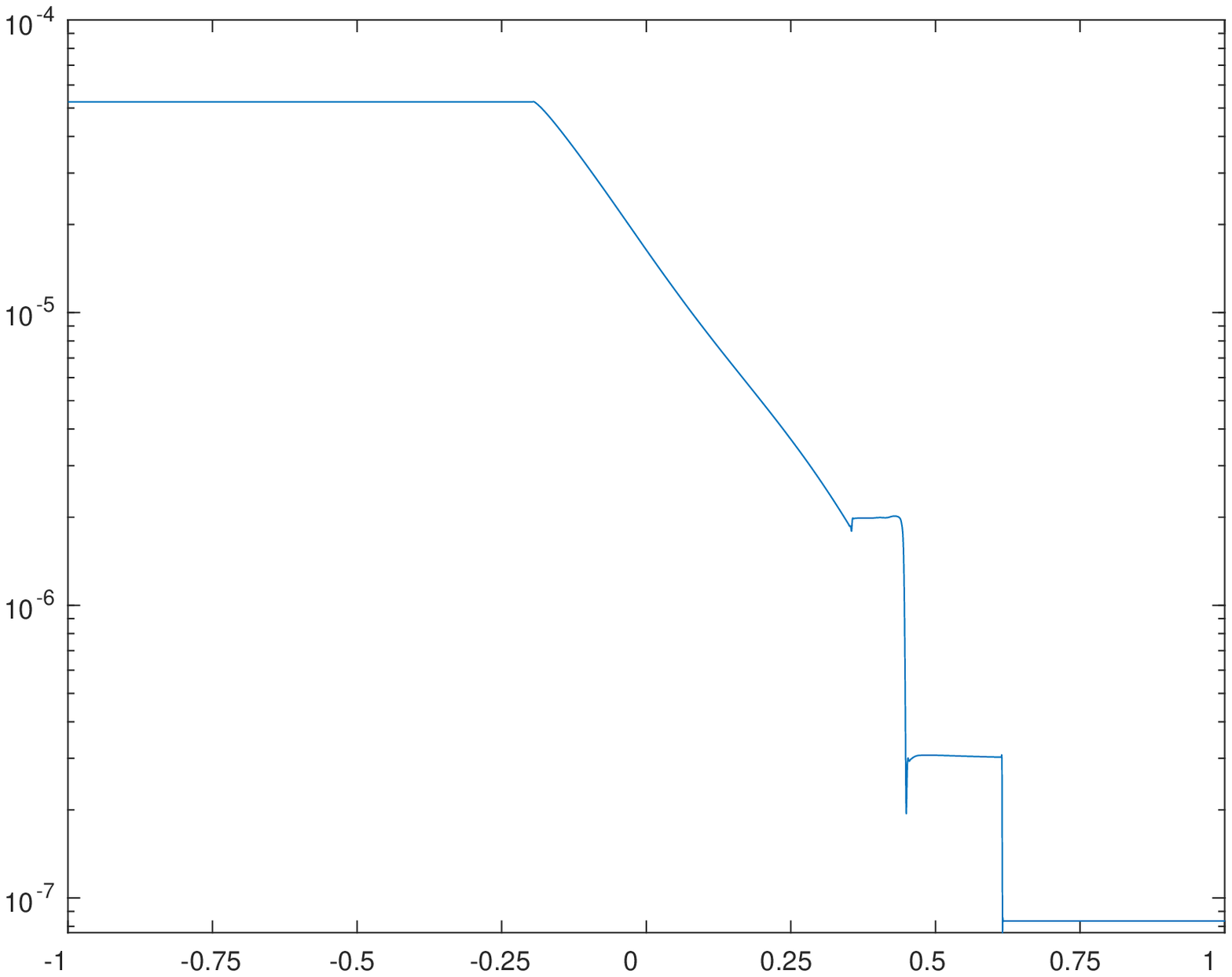}}
\subfigure[$ \rho_2$]{\includegraphics[width = 3in]{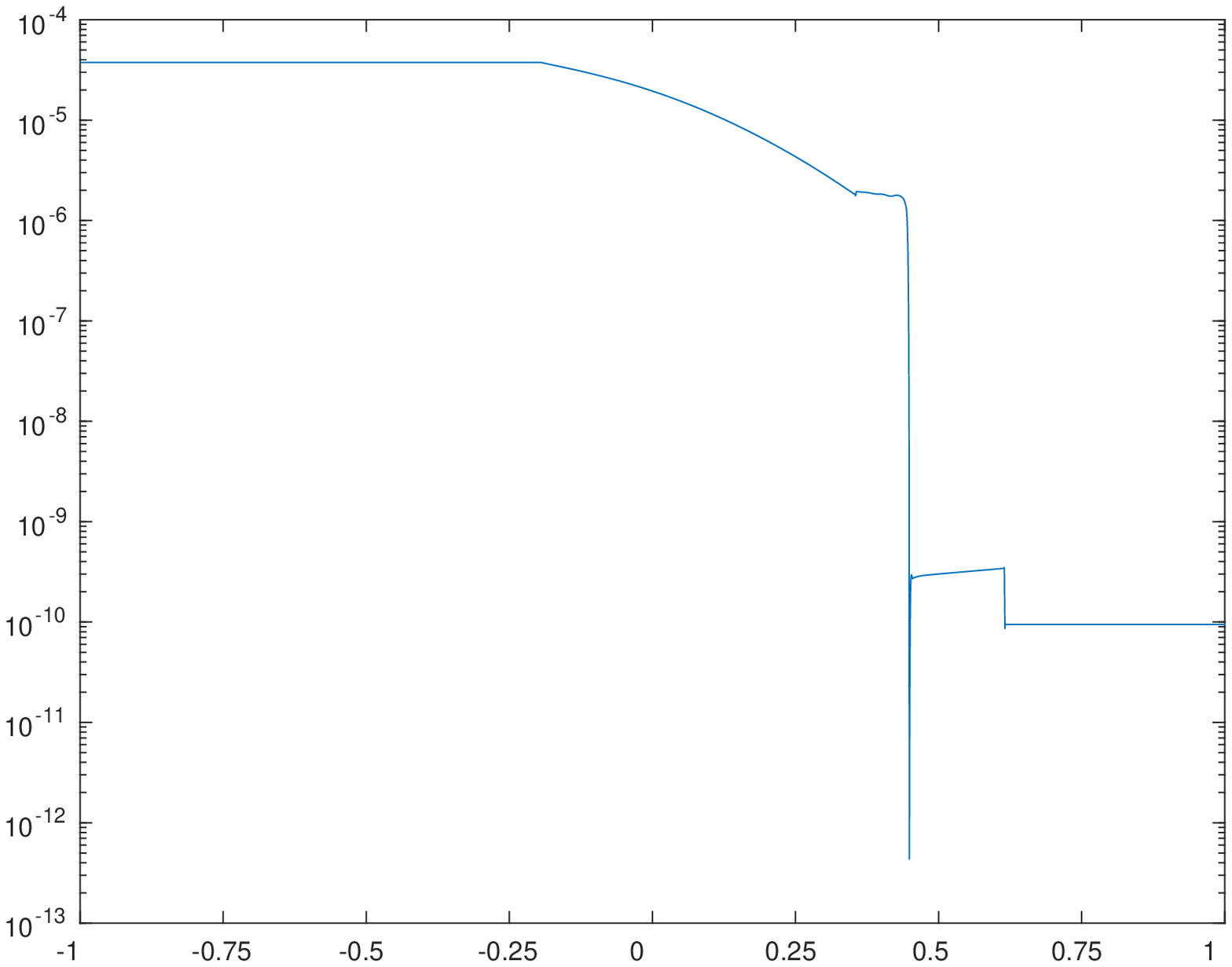}}\\
\subfigure[$ \rho_3$]{\includegraphics[width = 3in]{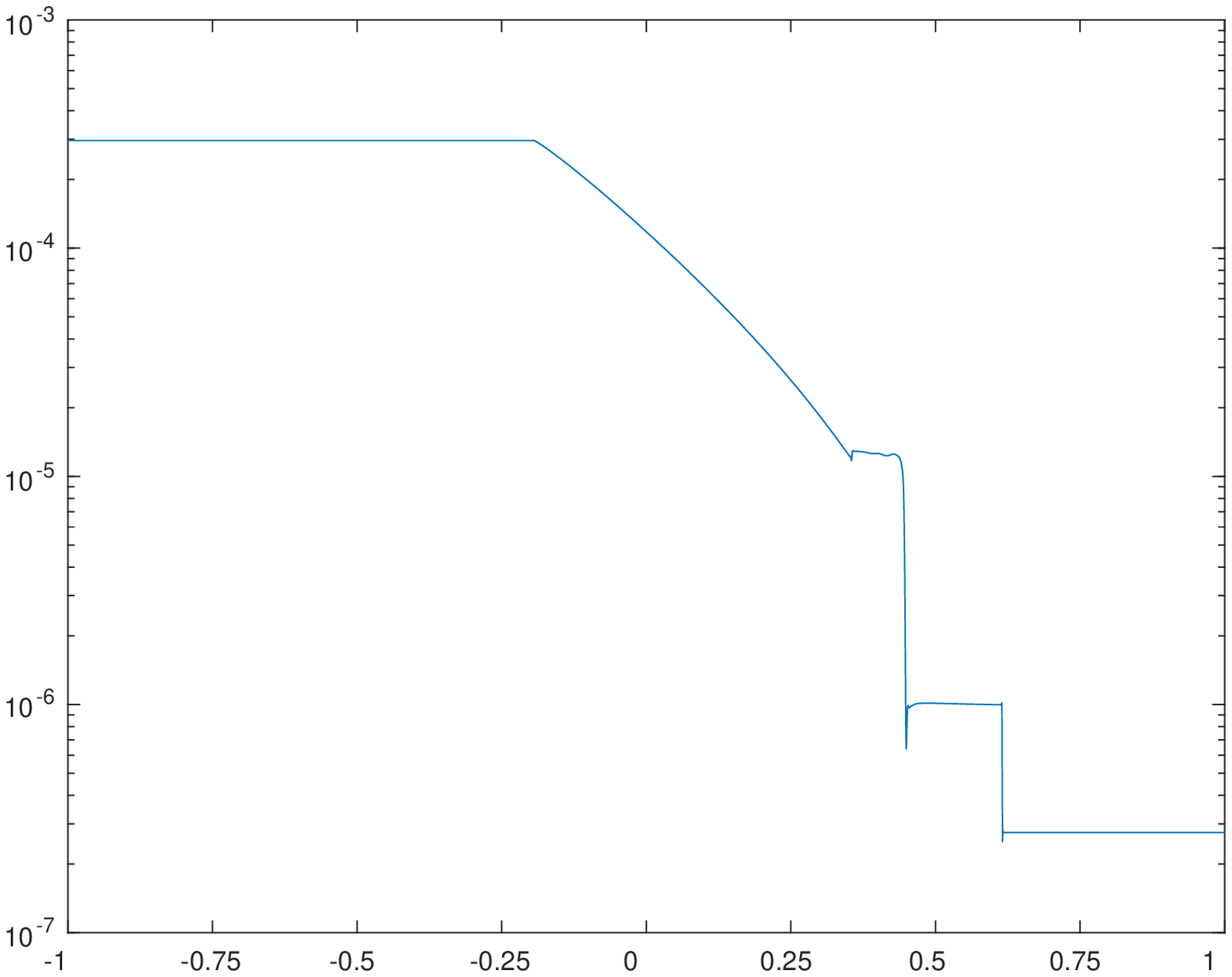}}
\subfigure[$ u$]{\includegraphics[width = 3in]{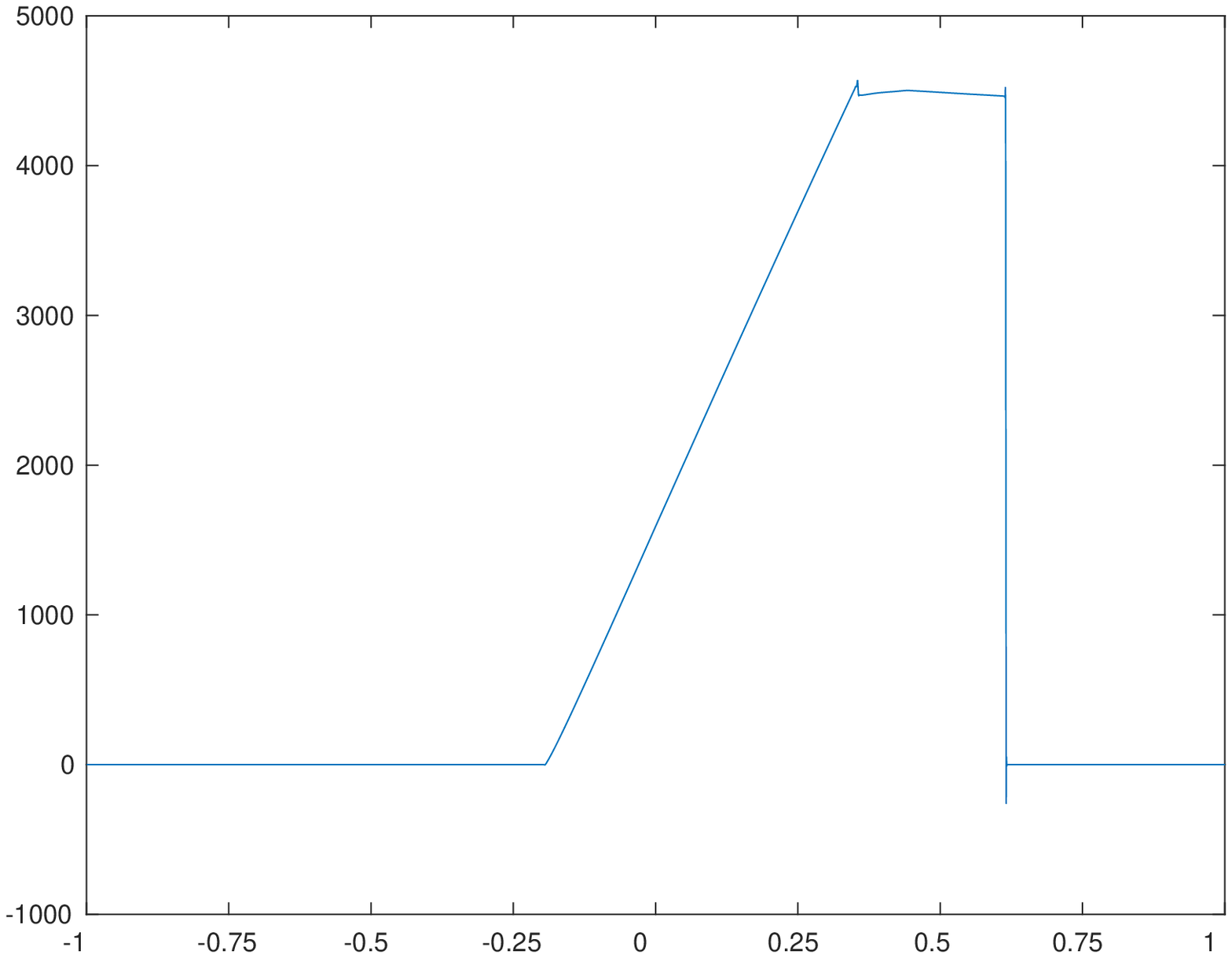}}\\
\subfigure[$ p$]{\includegraphics[width = 3in]{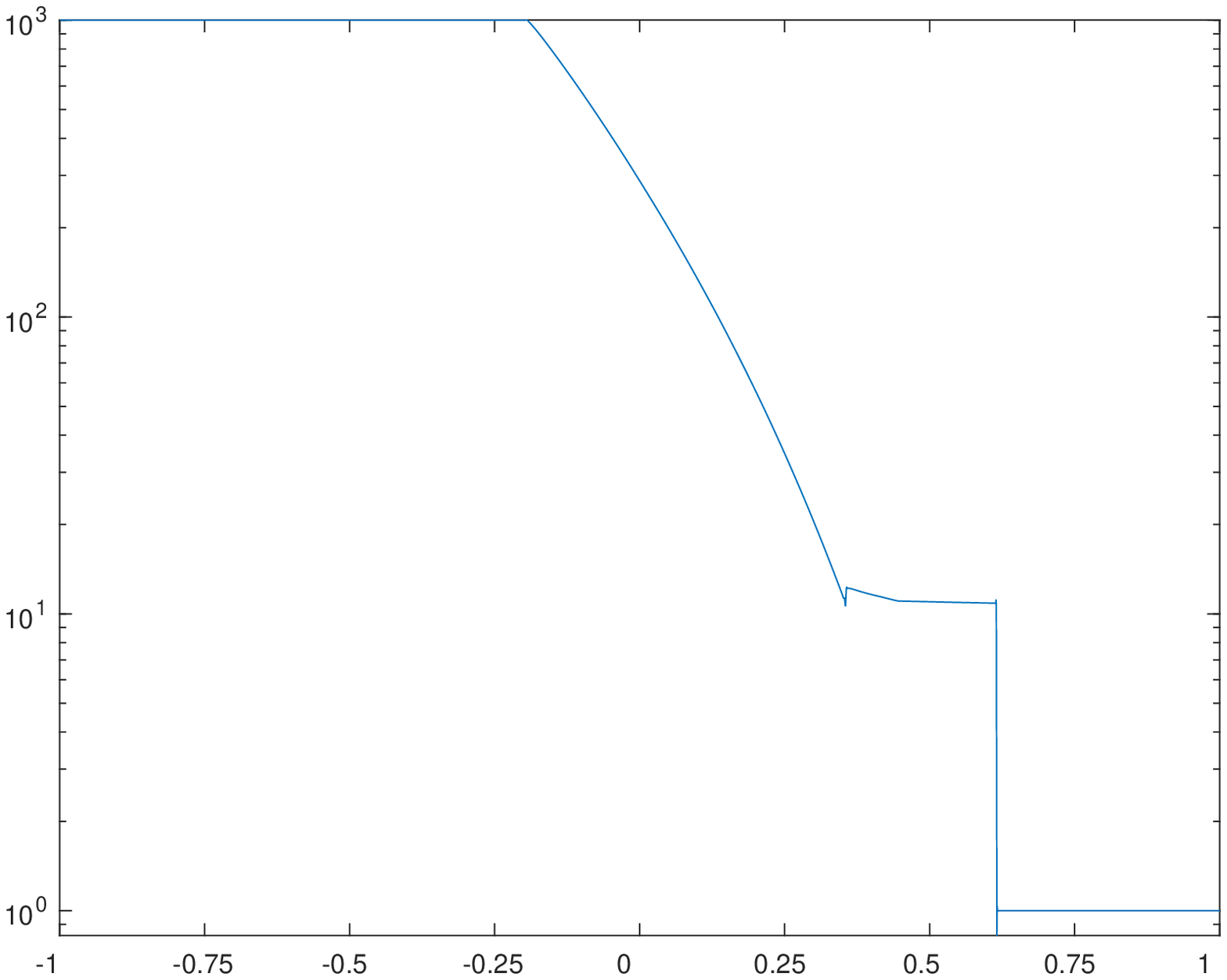}}
\caption{Example 4.3: three species reaction problem at t=0.0001 using MPMS2}
\label{4.3_MPMS2}
\end{figure}

\begin{figure}[!htbp]
\subfigure[$ \rho_1$]{\includegraphics[width = 3in]{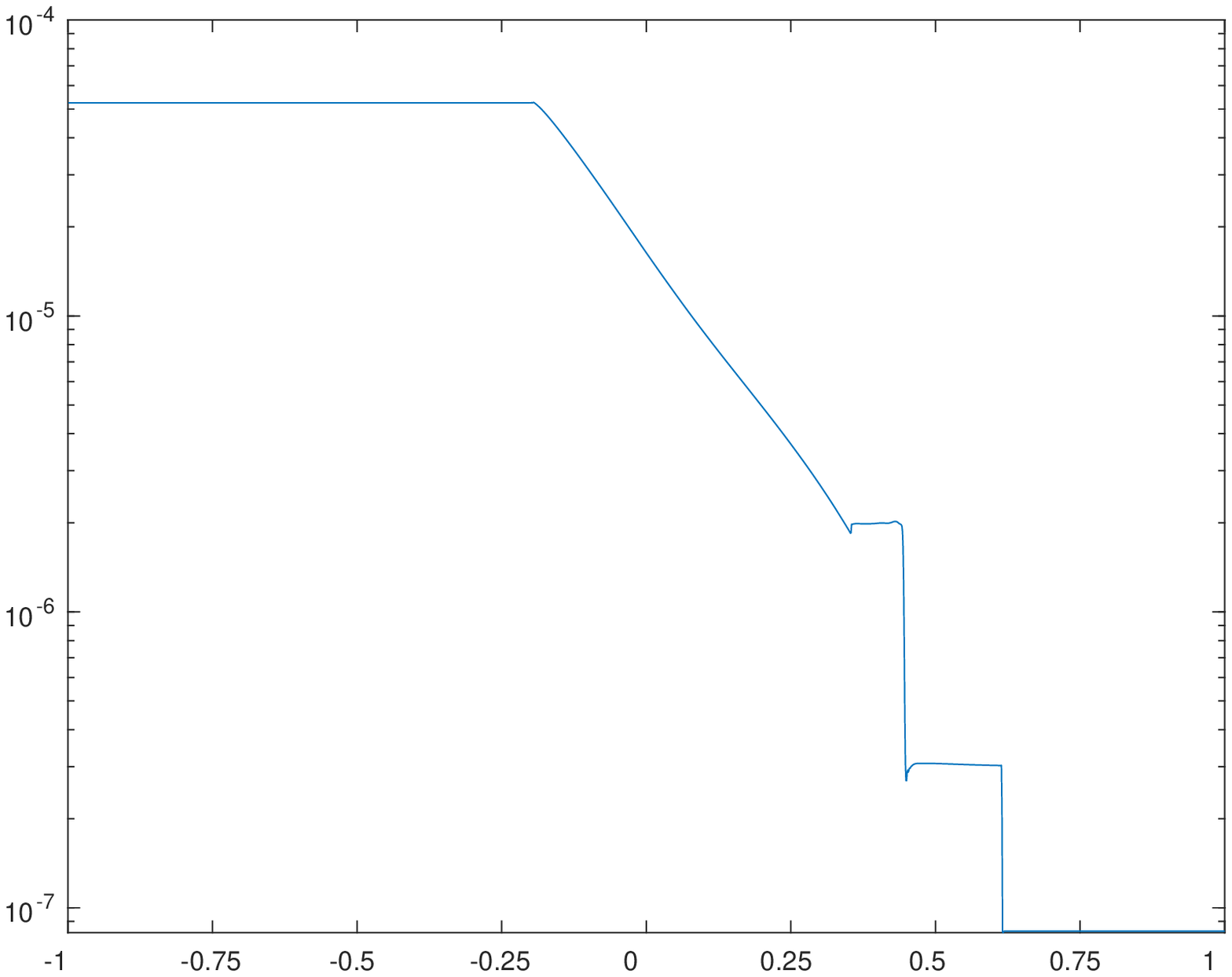}}
\subfigure[$ \rho_2$]{\includegraphics[width = 3in]{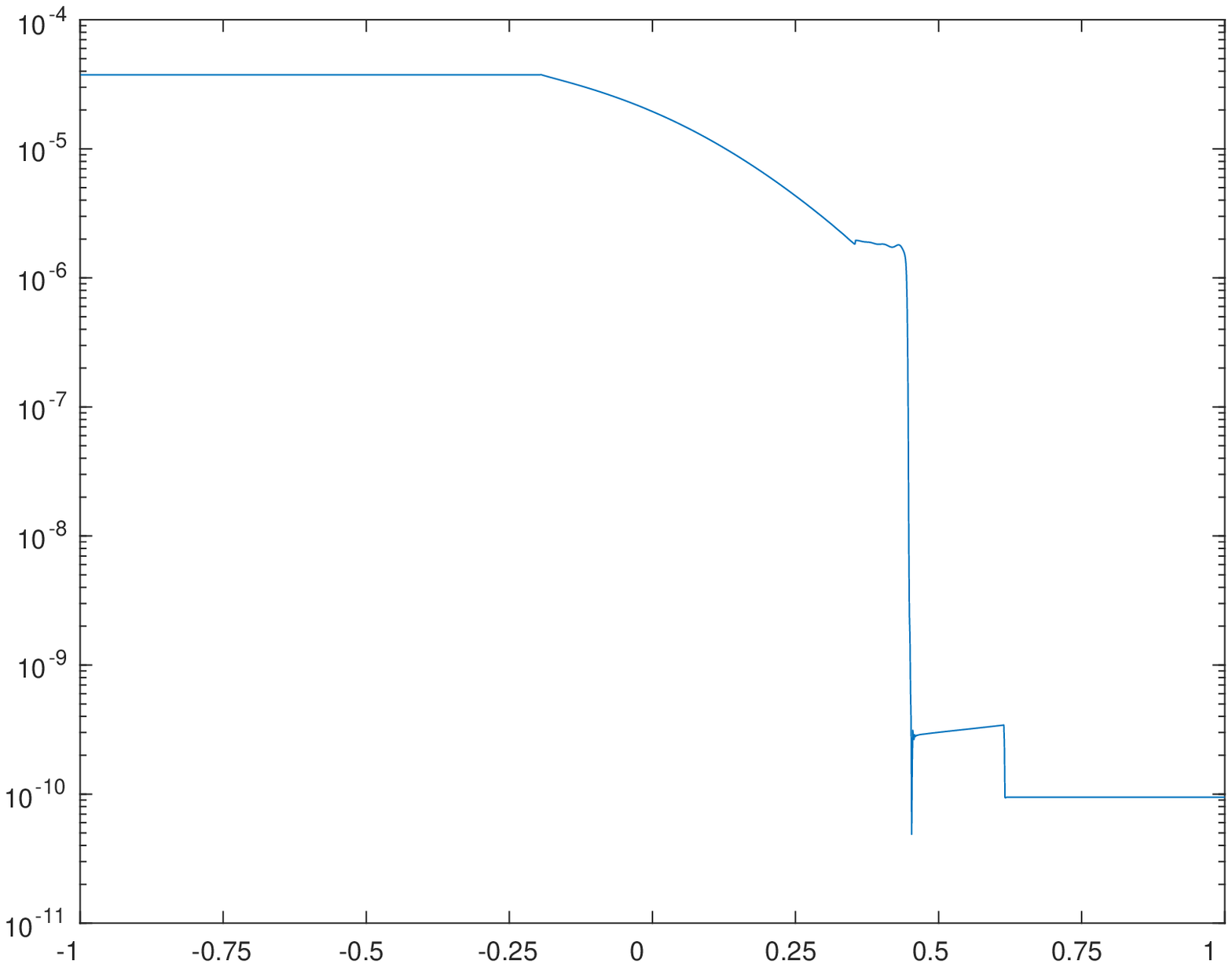}}\\
\subfigure[$ \rho_3$]{\includegraphics[width = 3in]{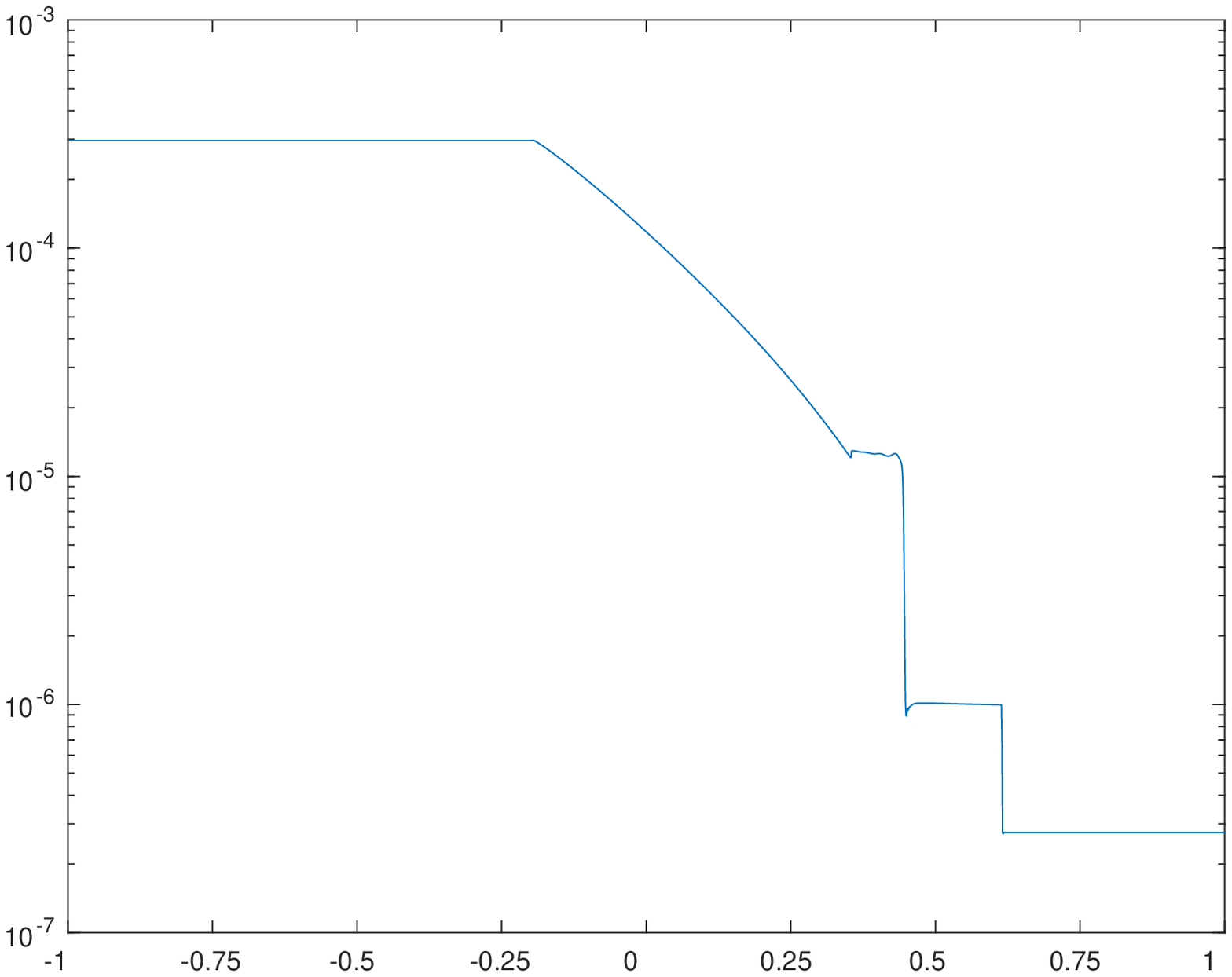}}
\subfigure[$ u$]{\includegraphics[width = 3in]{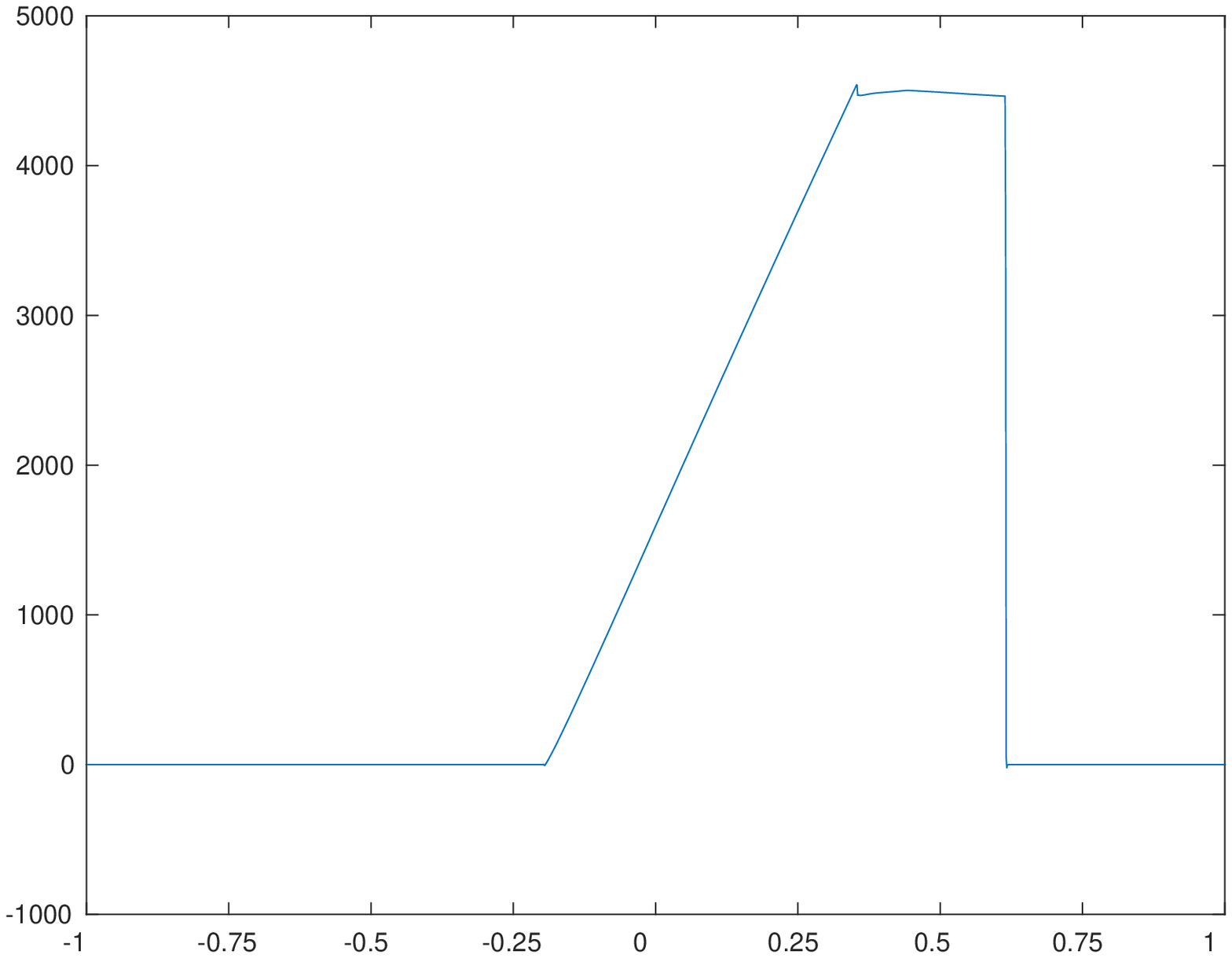}}\\
\subfigure[$ p$]{\includegraphics[width = 3in]{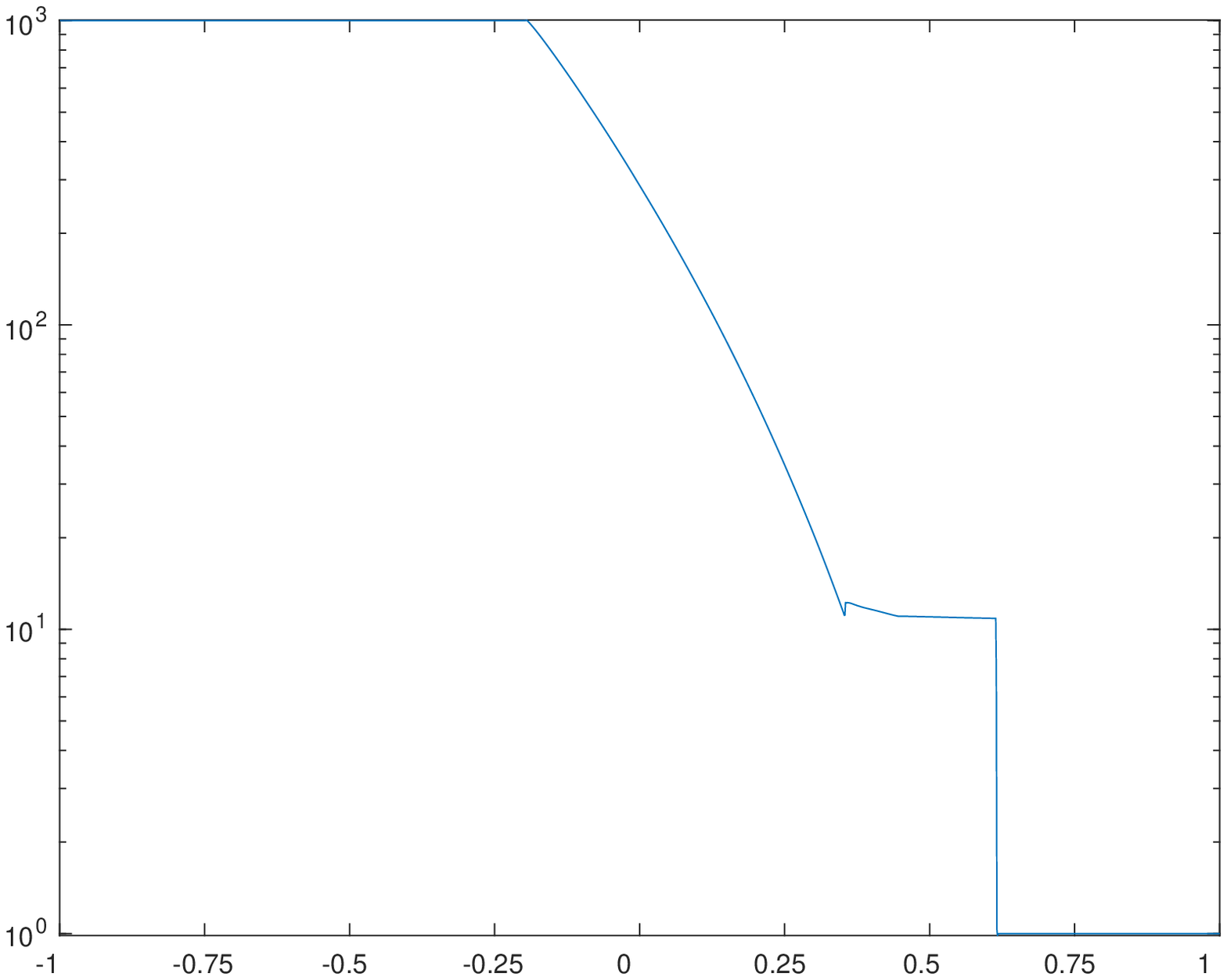}}
\caption{Example 4.3: three species reaction problem at t=0.0001 using MPMS3}
\label{4.3_MPMS3}
\end{figure}

\subsection{Reactive Euler Equations}
Consider the reactive Euler equations in 2D with
\begin{eqnarray}
\bm{U}&=&(\rho, m, n, E, \rho Y)^T, \notag\\
\bm{F}(\bm{U})&=&(m, \rho u^2+p, \rho uv, (E+p)u, \rho uY)^T, \notag\\
\bm{G}(\bm{U})&=&(n, \rho uv, \rho v^2+p, (E+p)v, \rho vY)^T, \notag\\
\bm{S}(\bm{U})&=&(0,0,0,0,\omega), \label{2D Euler equation}
\end{eqnarray}
where
$$
m=\rho u,\quad n=\rho v,\quad E=\frac12\rho (u^2+v^2)+\frac{p}{\gamma-1}+\rho qY,
$$
with $Y$ denoting the reactant mass fraction and $\omega$ being the source term which is expressed as
$$
\omega=-\tilde{K}\rho Ye^{-\tilde{T}/T}.
$$
To fit into our framework, we rewrite \eqref{2D Euler equation} into
\begin{eqnarray*}
\bm{U}&=&(\rho Y,\rho Z, m, n, E)^T, \notag\\
\bm{F}(\bm{U})&=&(\rho uY, \rho uZ, \rho u^2+p, \rho uv, (E+p)u)^T, \notag\\
\bm{G}(\bm{U})&=&(\rho vY, \rho vZ, \rho uv, \rho v^2+p, (E+p)v)^T, \notag\\
\bm{S}(\bm{U})&=&(\omega, -\omega, 0,0,0), 
\end{eqnarray*}
where $Z$ denotes the unreacted mass fraction. 
\begin{ex}\label{ex4}
In this example, the numerical convergence of our scheme is tested. The domain is set to be $[0, 2] \times [0, 2]$.
The initial condition is given as follows: if $x^2 + y^2 \leq 0.36$, then $(\rho, u, v, p, Y ) = (1, 0, 0, 80, 0)$; otherwise,
$(\rho, u, v, p, Y ) = (1, 0, 0, 10^{-9}, 1)$. The boundary conditions on the bottom and left are reflective. The terminal time is t = 0.2. We use uniform rectangular meshes with mesh sizes $\Delta x=\Delta y=1/120$. The numerical results with MPMS2 and MPRK2 are shown in Figure \ref{4.4_MPMS2} and Figure \ref{4.4_MPRK2}, respectively, which are comparable to the results in \cite{PP/detonation} where the converged solutions are observed. Also, we can observe some spurious oscillations in the numerical approximations, and this is mainly due to the lack of mechanisms to suppress oscillations.

\end{ex}

\begin{ex}\label{ex5}
We test the detonation diffraction in this example. The same parameters and initial conditions in \cite{PP/detonation} are applied. 
The initial condition are as follows: if $x<0.5$, then $(\rho,u,v,E,Y)=(11,6.18, 0, 970, 1)$; otherwise, $(\rho,u,v,E,Y)=(1, 0, 0, 5, 5, 1)$. The boundary conditions are reflective except that at $x=0$, where we take $(\rho,u,v,E,Y)=(11,6.18, 0, 970, 1)$. The terminal time is $t=0.6$. The parameters are $\gamma=1.2$, $q=50$, $\tilde{T}=50$ and $\tilde{K}=2566.4$. The numerical schemes may produce negative density and or pressure for this example which can lead to blow-up of the numerical simulations. DG coupled with the time integration MPRK2, and MPMS2 are tested and the numerical results with $\Delta x=\Delta y=1/48$ are shown in Figures \ref{4.5_MPRK2}-\ref{4.5_MPMS2}. Our numerical results agree with previous works.
\end{ex}

\begin{figure}[!htbp]
\centering
\subfigure[Contour of Density]{\includegraphics[width = 3.2in]{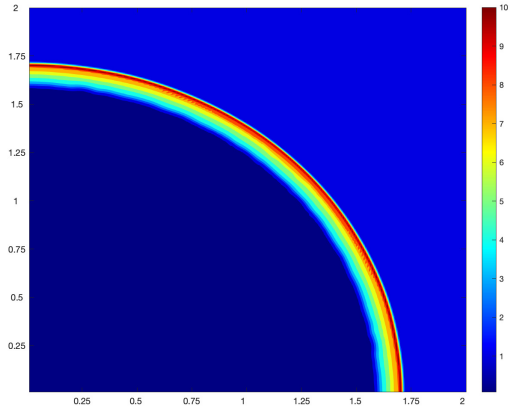}}
\subfigure[Contour of Pressure]{\includegraphics[width = 3.2in]{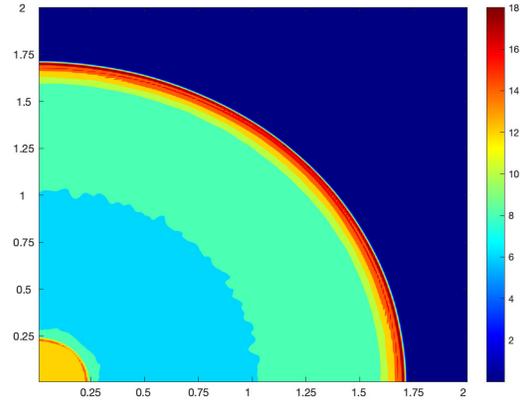}}\\
\subfigure[Cut along $y=0$]{\includegraphics[width = 3in]{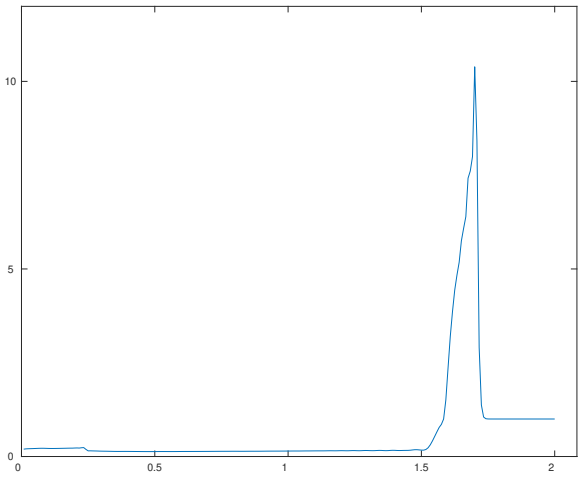}}
\subfigure[Cut along $y=0$]{\includegraphics[width = 3.08in]{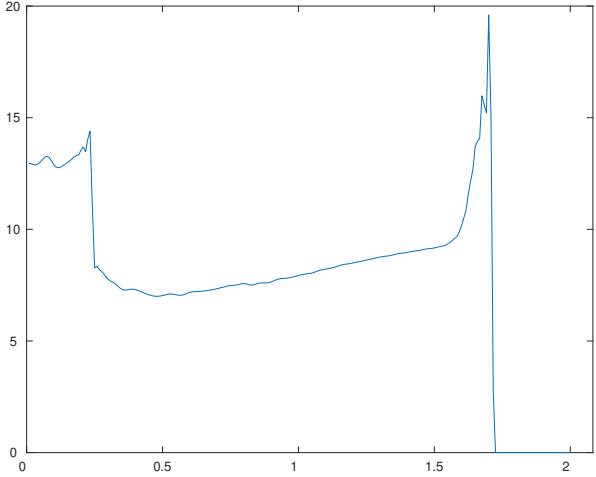}}
\caption{Example4.4 numerical convergence study MPMS2}
\label{4.4_MPMS2}
\end{figure}

\begin{figure}[!htbp]
\centering
\subfigure[Contour of Density]{\includegraphics[width = 3.2in]{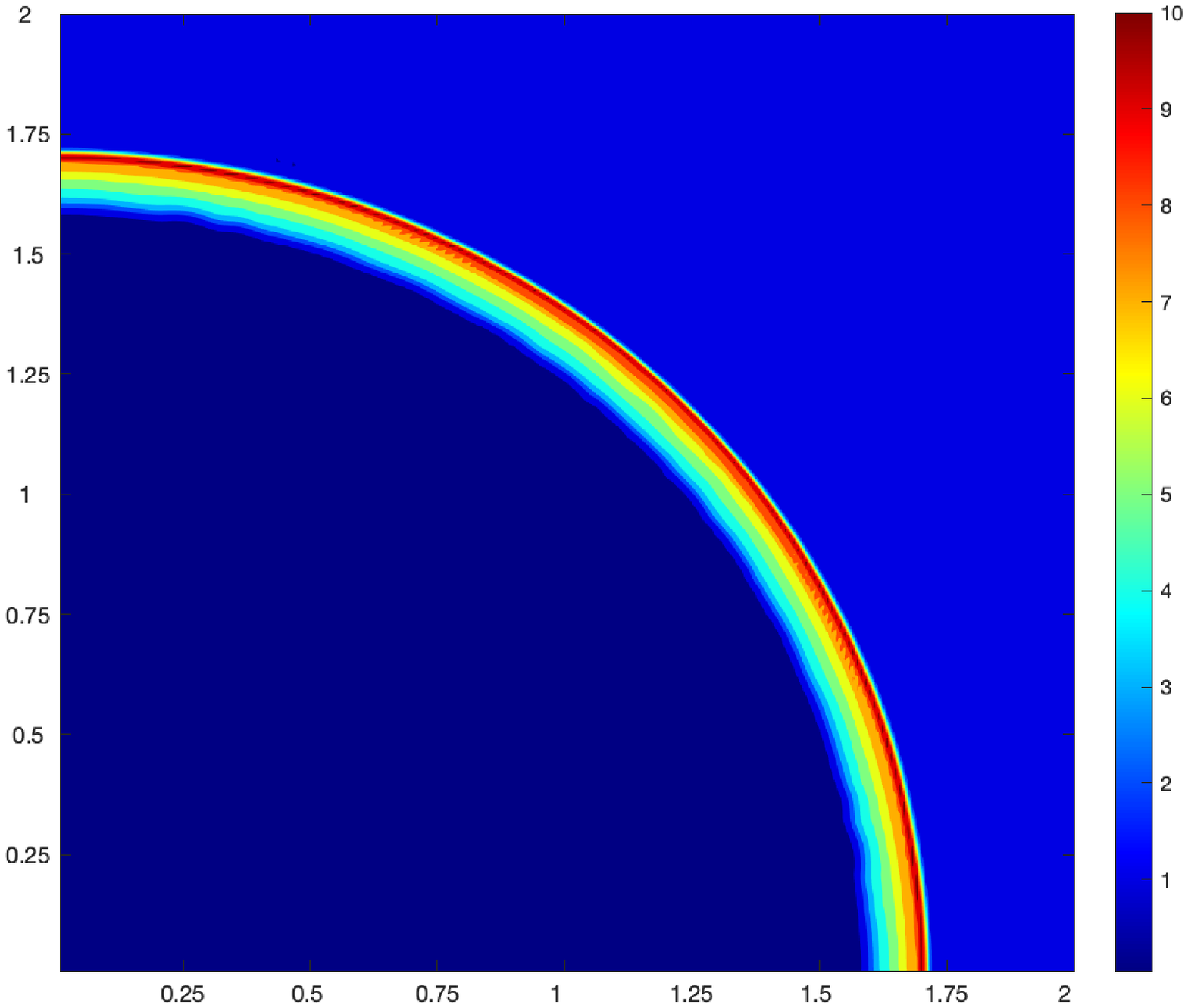}}
\subfigure[Contour of Pressure]{\includegraphics[width = 3.2in,height=2.6in]{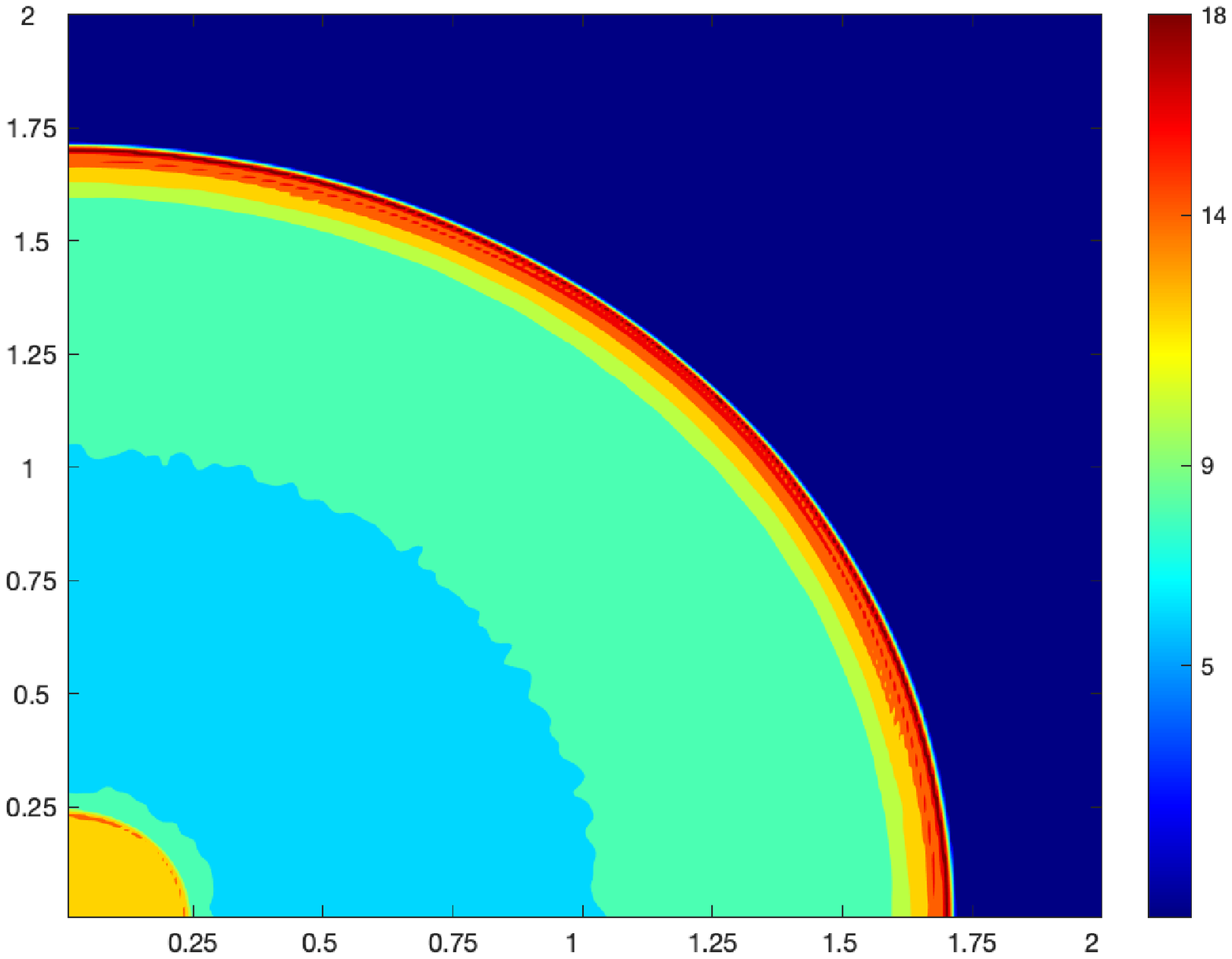}}\\
\subfigure[Cut along $y=0$]{\includegraphics[width = 3in]{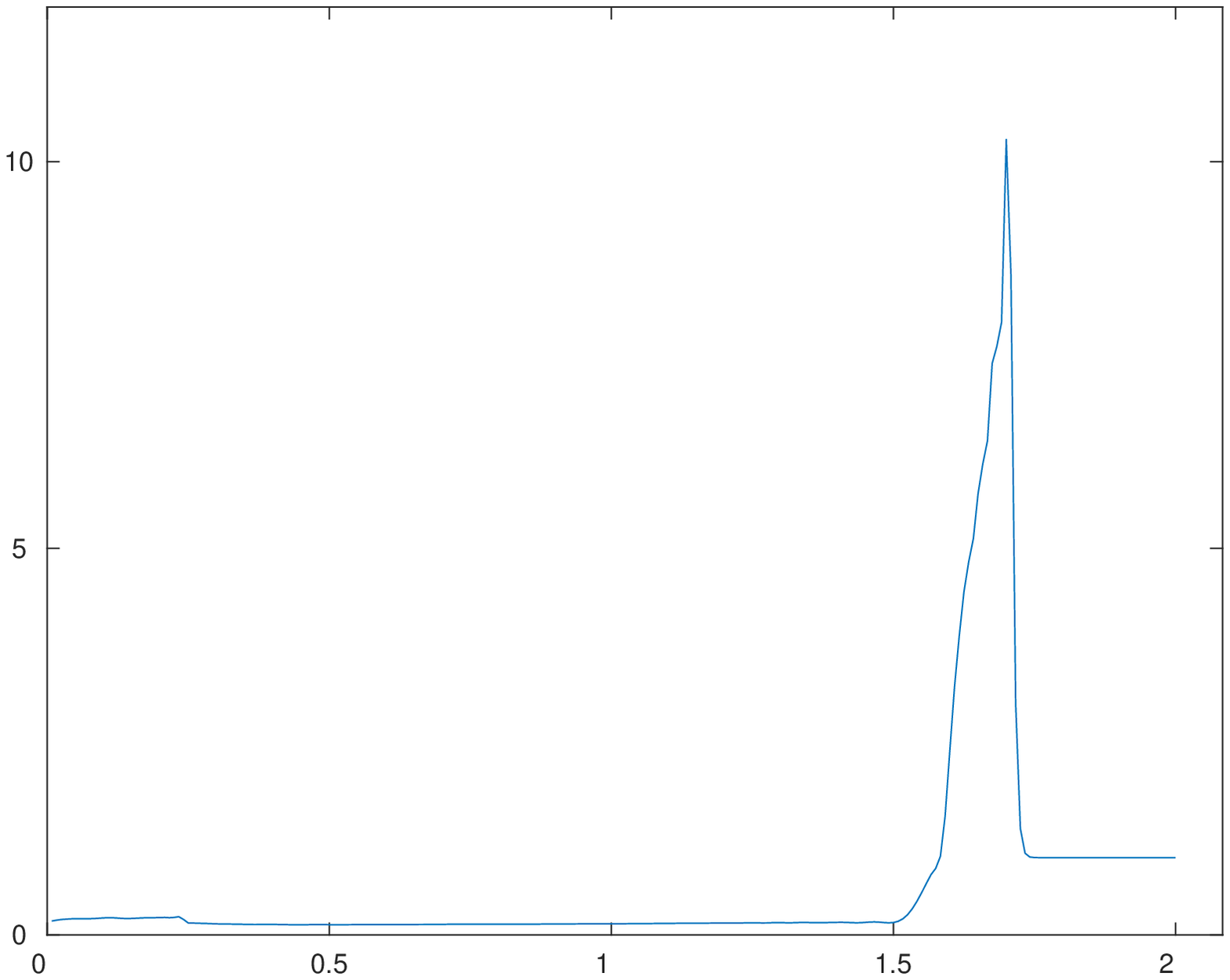}}
\subfigure[Cut along $y=0$]{\includegraphics[width = 3in]{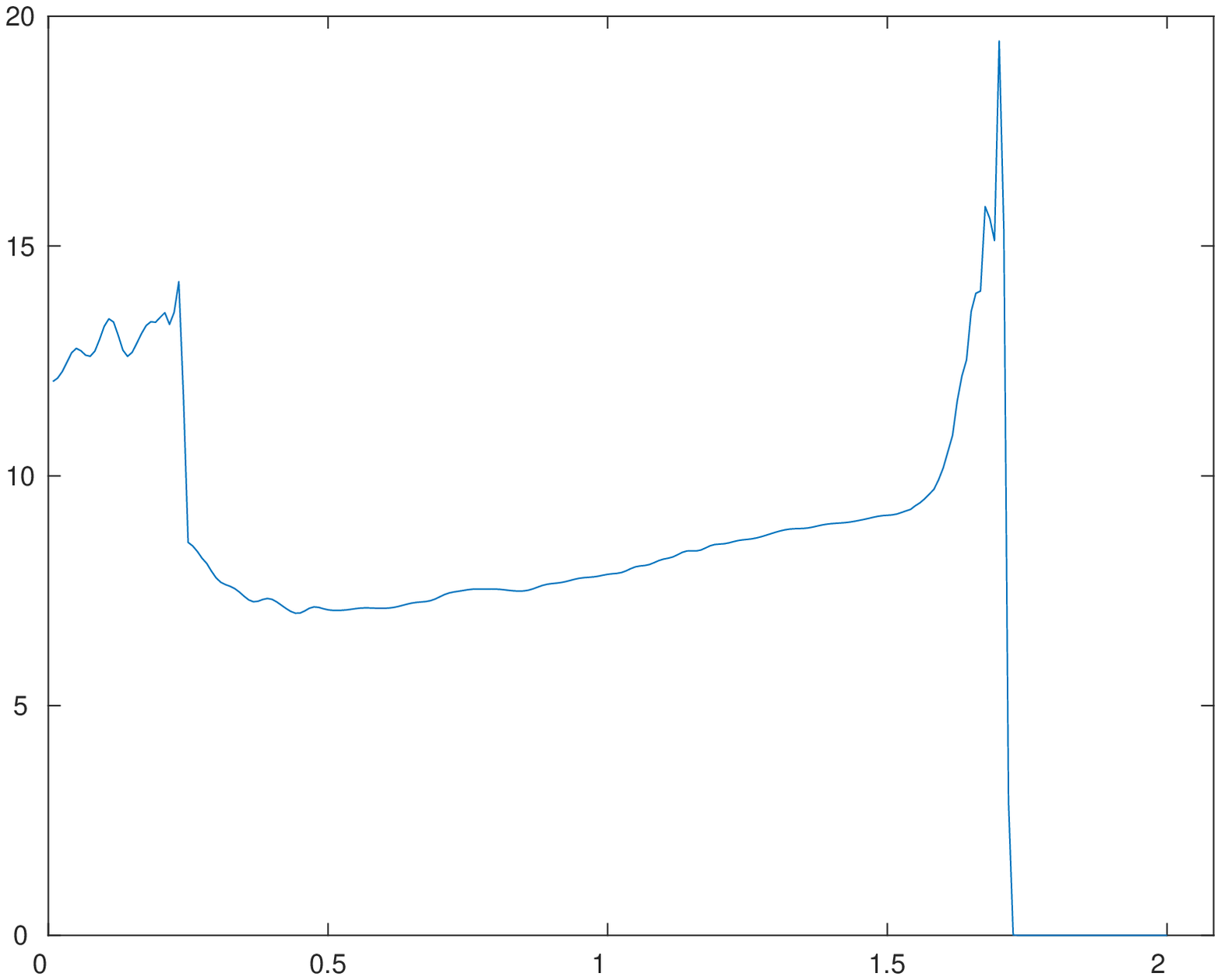}}
\caption{Example4.4 numerical convergence study MPRK2}
\label{4.4_MPRK2}
\end{figure}

\begin{figure}[!htbp]
\subfigure[Colored Contour of Density]{\includegraphics[width = 3.75in]{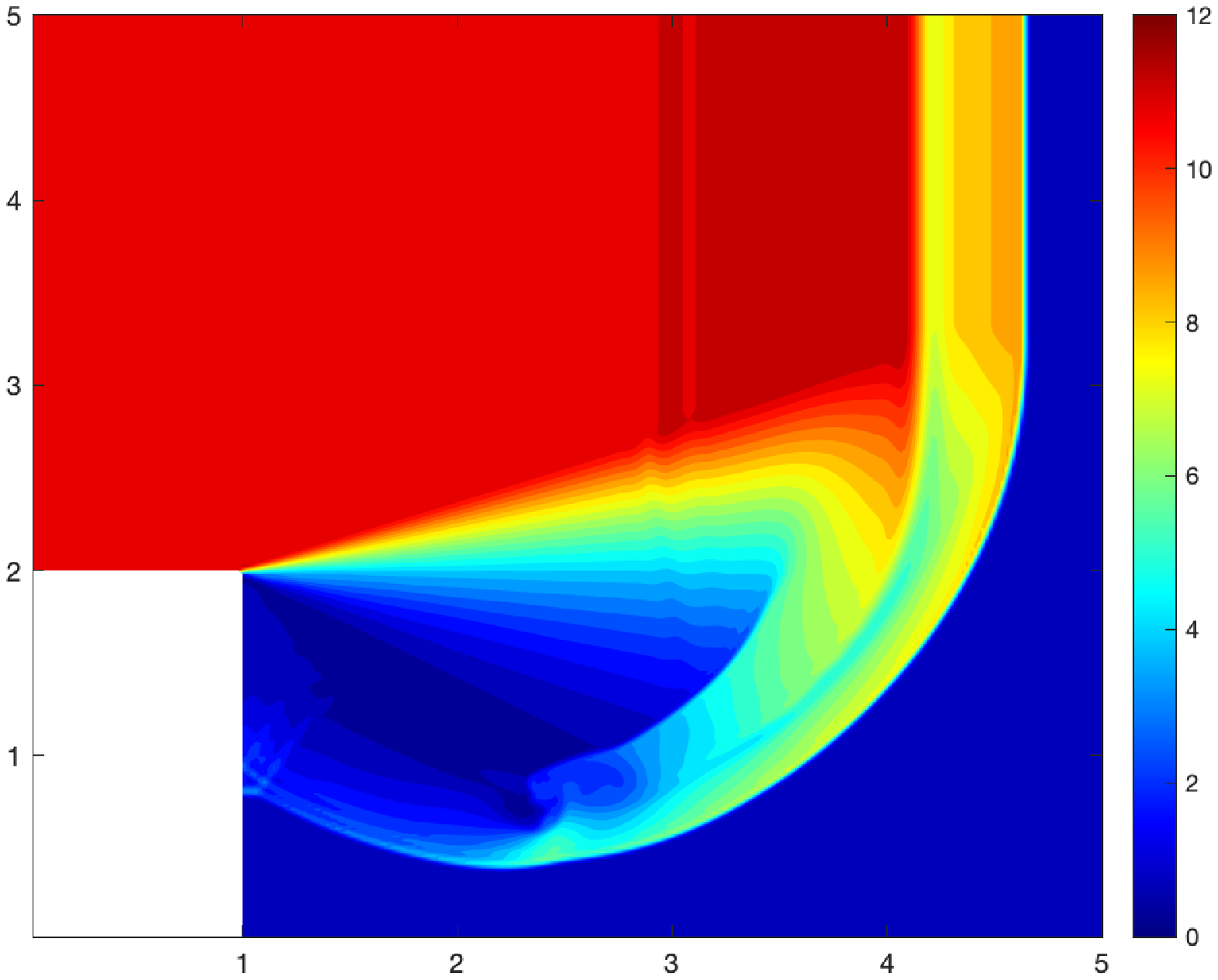}}
\subfigure[Contour Line of Density ]{\includegraphics[width = 3in]{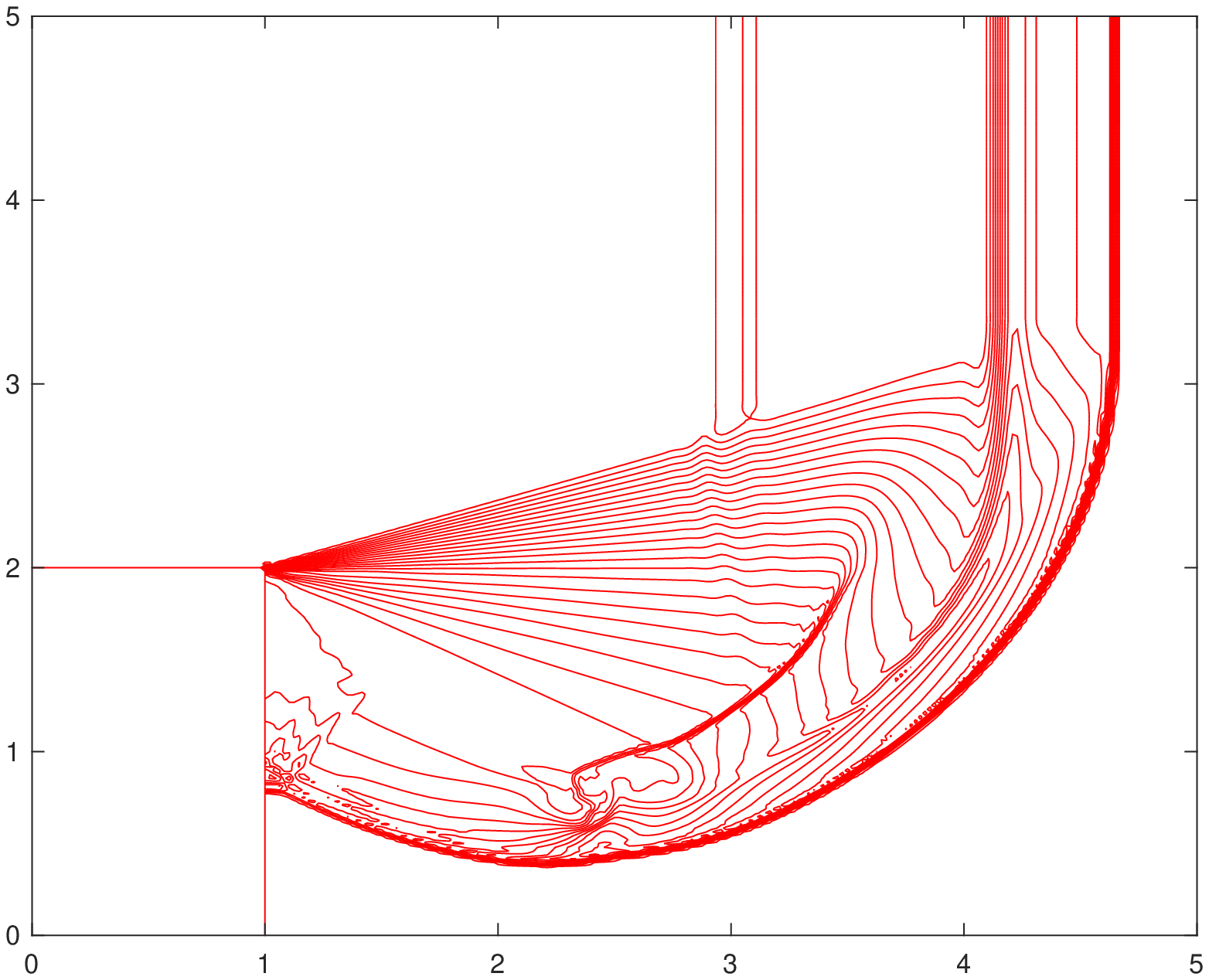}}\\
\subfigure[Colored Contour of Pressure]{\includegraphics[width = 3.75in]{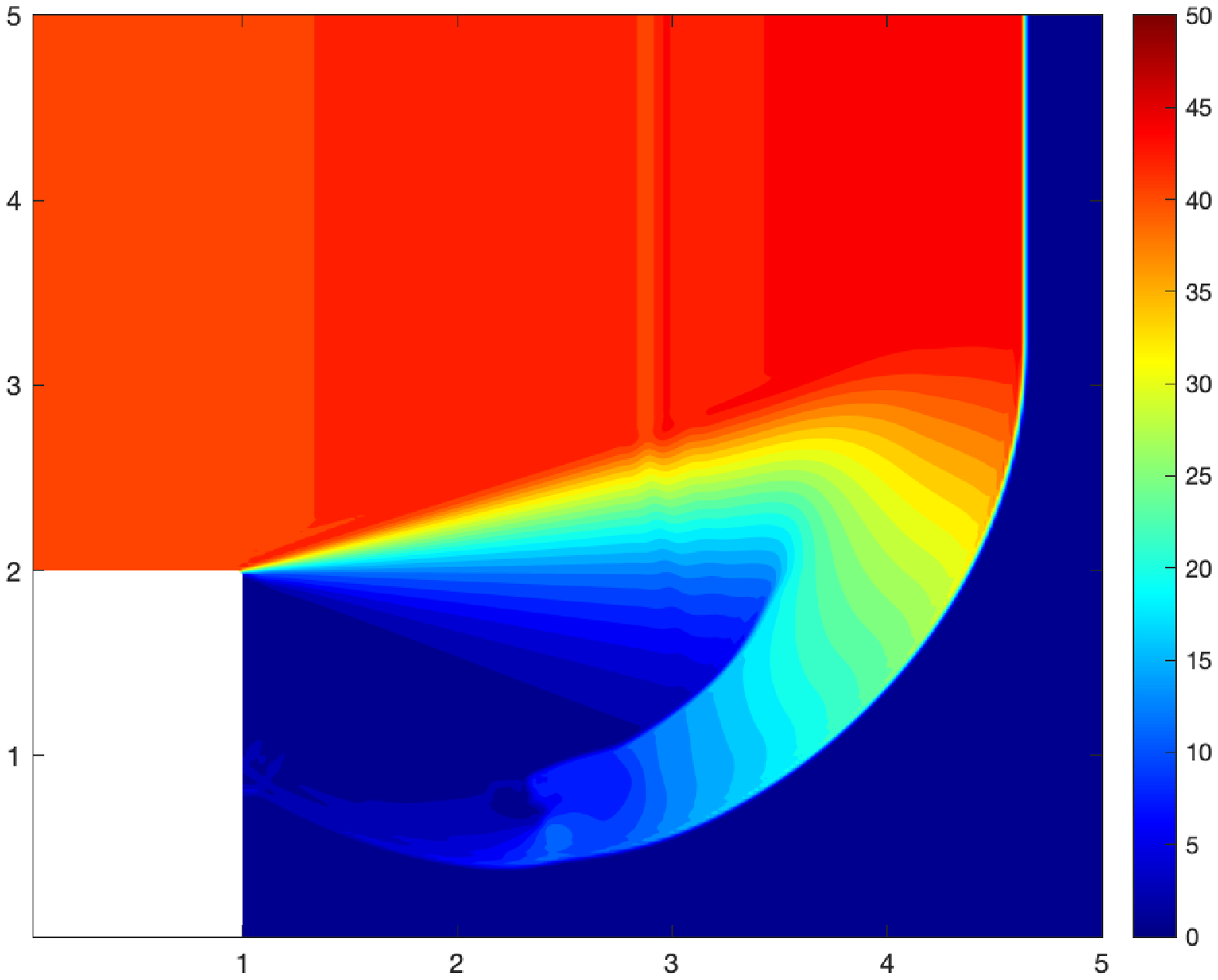}}
\subfigure[Contour Line of Pressure]{\includegraphics[width = 3in]{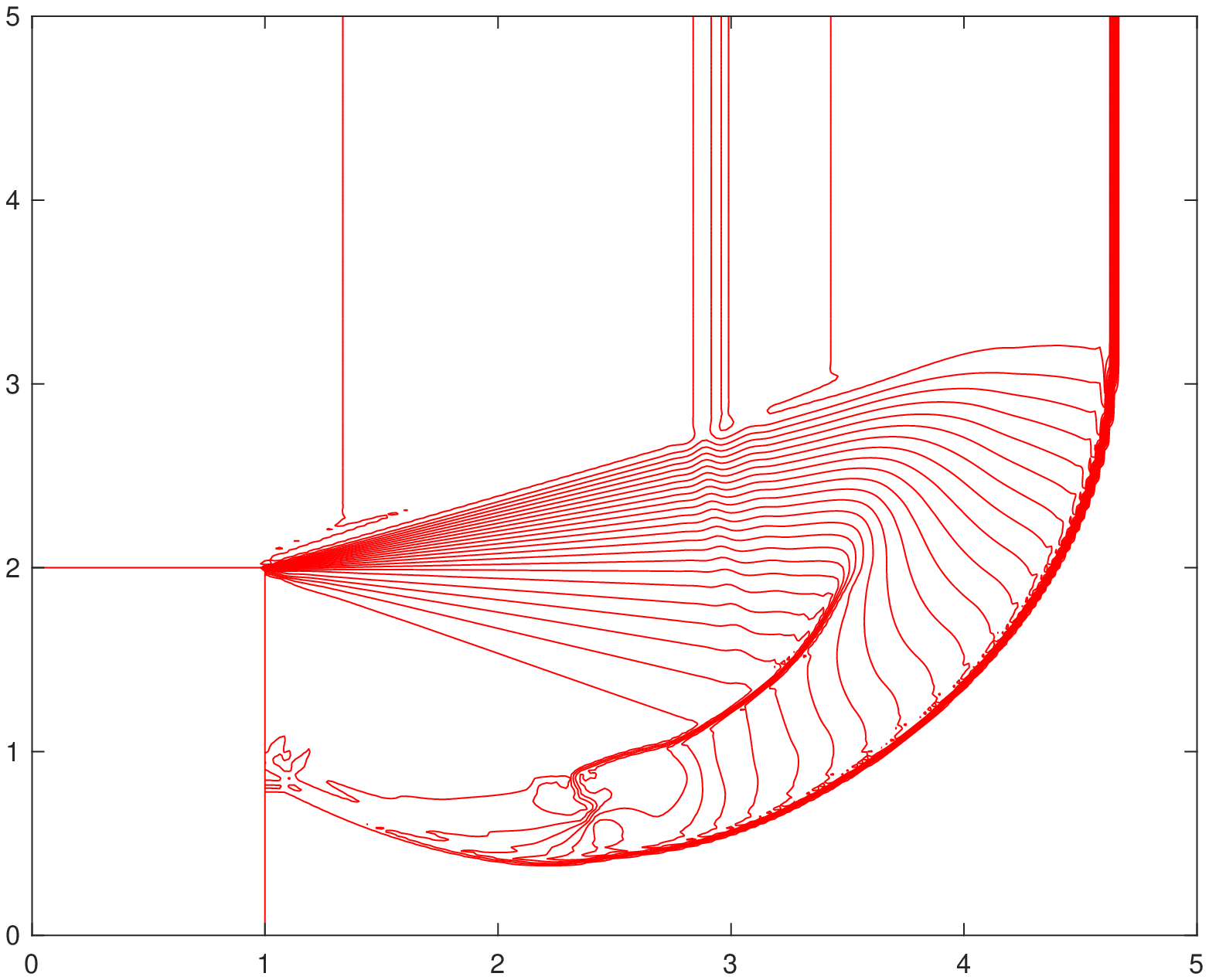}}
\caption{Example4.5 detonation diffraction problem using MPRK2}
\label{4.5_MPRK2}
\end{figure}

\begin{figure}[!htbp]
\subfigure[Colored Contour of Density]{\includegraphics[width = 3.75in]{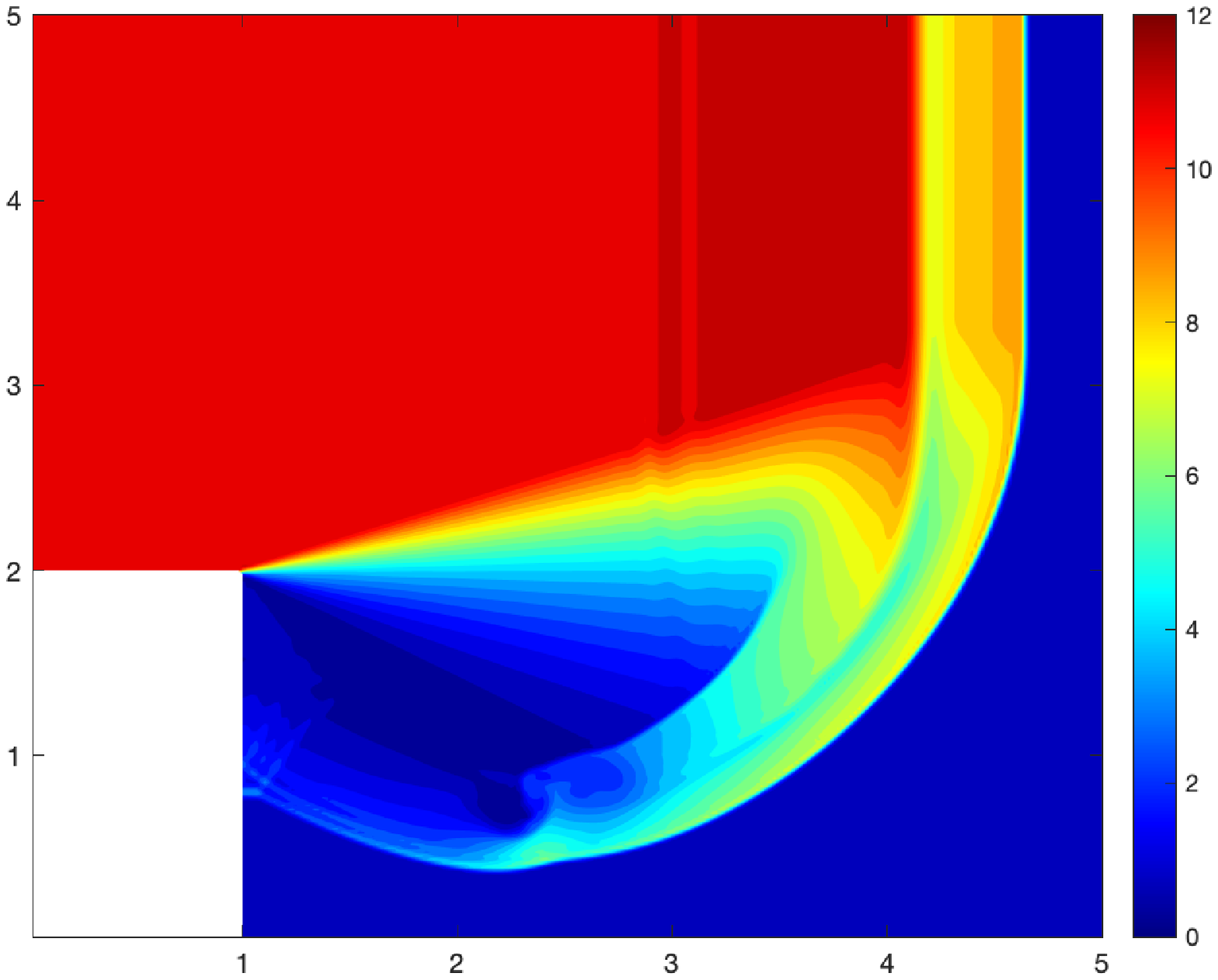}}
\subfigure[Contour Line of Density ]{\includegraphics[width = 3in]{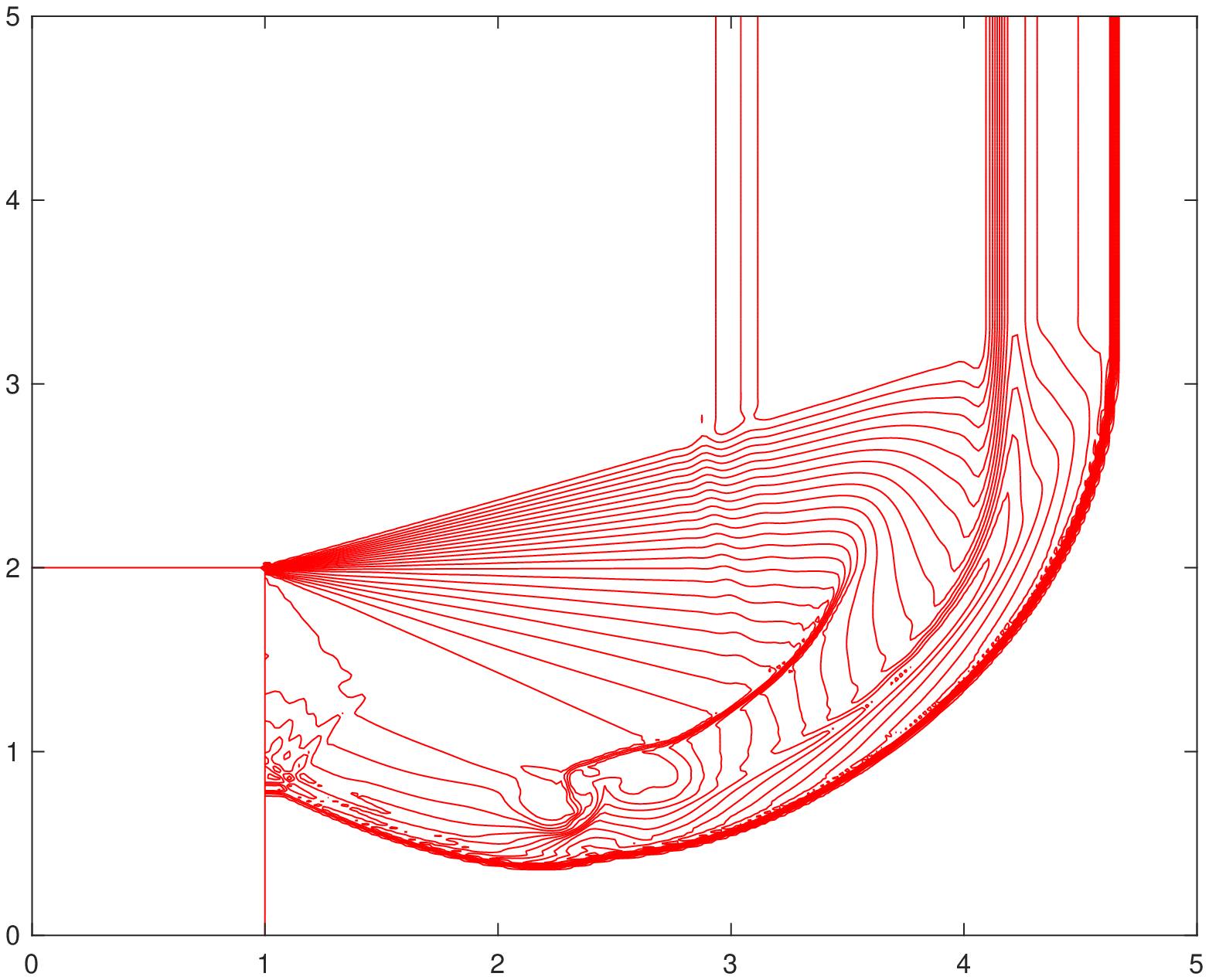}}\\
\subfigure[Colored Contour of Pressure]{\includegraphics[width = 3.75in]{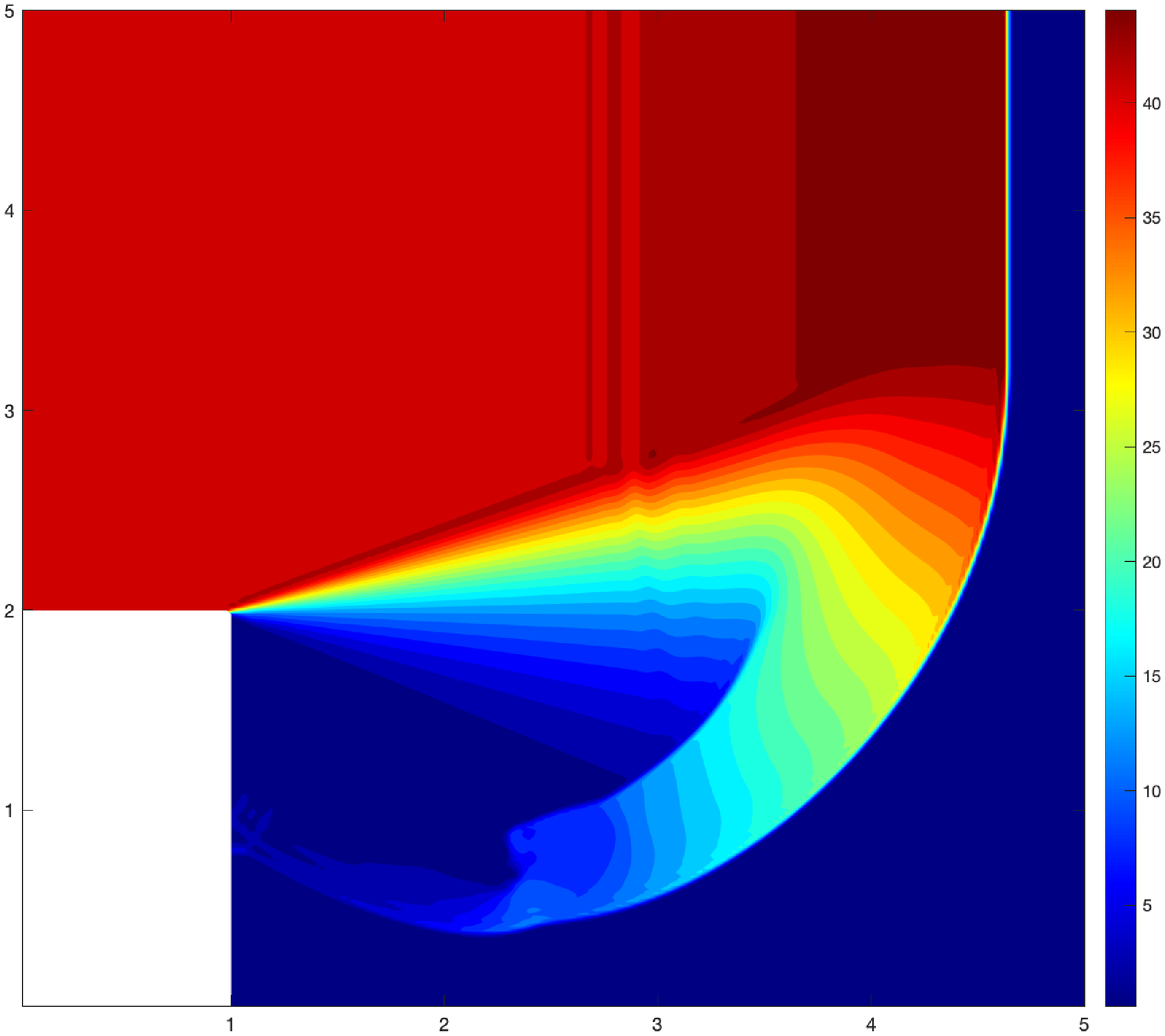}}
\subfigure[Contour Line of Pressure]{\includegraphics[width = 3in]{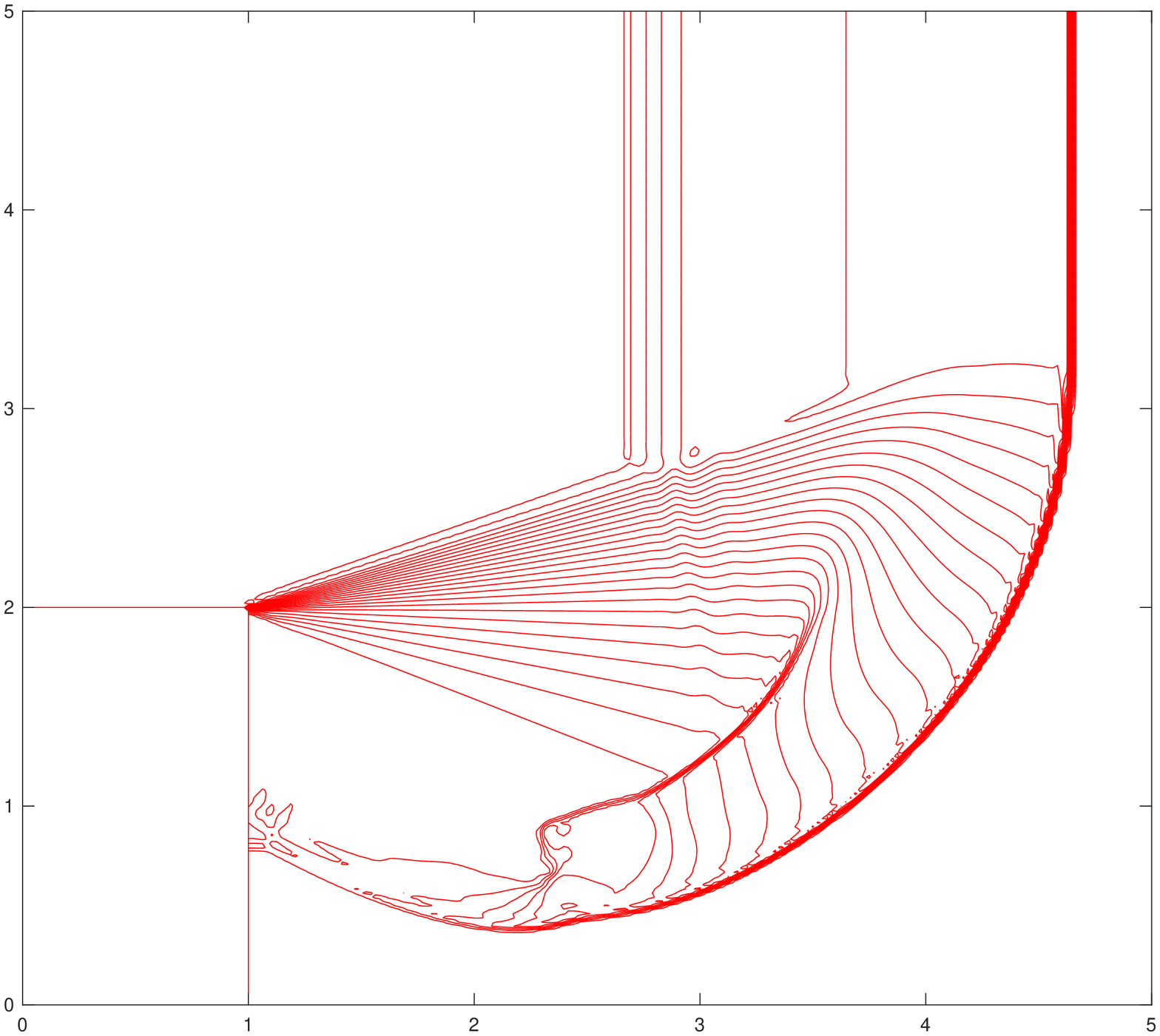}}
\caption{Example 4.5 detonation diffraction problem using MPMS2}
\label{4.5_MPMS2}
\end{figure}

\section{Concluding remarks}
In this paper, we constructed MP MS method up to third order accuracy for production-destruction equations. Coupled with bound-preserving DG methods, the scheme we obtained have both conservative and bound-preserving properties. Numerical experiments have shown the accuracy and effectiveness of the proposed schemes. {
In the numerical experiments, we take appropriately small time step. To the best of our knowledge, there is no modified Patankar multistep method in the literature. Therefore, our method is a supplement to the existing method. The multistep methods generally require less function evaluations in each step than the Runge-Kutta method. Therefore, for ODEs or PDEs without characteristic speed changing wildly, the multistep method works better than the Runge-Kutta method. However, if the velocity changes wildly, the Runge-Kutta method would perform better.}

\section*{Conflict of interest}
On behalf of all authors, the corresponding author states that there is no conflict of interest.

\end{document}